\documentclass[10pt]{article}
\usepackage{amssymb,amsfonts}
\usepackage{pb-diagram}
\usepackage{amsmath,amsthm,amsfonts,amssymb}
\usepackage[usenames]{color}
\usepackage{hyperref}
\usepackage{soul}
\usepackage{graphicx}
\usepackage{amssymb}
\usepackage{amsmath}
\usepackage{amssymb}

\newtheorem{theorem}{Theorem}
\newtheorem{definition}{Definition}

\newtheorem{lemma}{Lemma}
\newtheorem{cor}{Corollary}

\newtheorem{conjecture}{Conjecture}

\newcommand{\be}{\begin{enumerate}}
\newcommand{\ee}{\end{enumerate}}
\newcommand{\beq}{\begin{equation}}
\newcommand{\eeq}{\end{equation}}

\def\N{{\mathbb{N}}}
\def\Z{{\mathbb{Z}}}

\def\Q{{\mathbb{Q}}}

\def\MA{{\mathbb{A}}}

\def\B{{\mathcal{B}}}
\def\A{{\mathcal{A}}}
\def\M{{\mathcal{M}}}

\def\C{{\mathcal{C}}}

{}
{}

\title{Equations in Algebras}
\author{Olga Kharlampovich \footnote{Hunter College, CUNY, Supported by NSF grant DMS-1201379 and PSC-CUNY  award} and Alexei Myasnikov \footnote{Stevens Institute of Technology, supported by NSF grant DMS-1201379}}

\date{}

\pagestyle{myheadings}

\begin{document}

\maketitle

\begin{abstract}   We  show that   the Diophantine problem(decidability of equations)  is undecidable in free associative algebras over any field and in  the group algebras over any field of  a wide variety  of  torsion free groups, including toral relatively hyperbolic groups, right angled Artin groups, commutative transitive groups, the  fundamental groups of various graph groups,  etc.

\end{abstract}

\section{Introduction}

In this paper we study Diophantine problems (decidability of equations) in free associative algebras over an arbitrary field and group algebras for  a wide class of groups. We show that some kind of  arithmetic can be always interpreted by equations in  all these algebras, hence undecidability of Diophantine problems. Our main approach is to reduce the Diophantine problem in each of these algebras to similar  problems in various  polynomial rings.

Study of equations in algebra has a rich and long history. The famous Hilbert tenth problem stated in 1900 asked for  a procedure which, in a finite number of steps, can determine whether a polynomial equation (in several variables) with integer coefficients has or does not have integer solutions.
In 1970  Matiyasevich, following the work of Davis, Putnam and Robinson, solved this problem in the  negative  \cite{mat}.
Similar  questions can be asked for arbitrary commutative rings $R$.  The {\em Diophantine problem for a given commutative ring $R$} asks if there exists  an algorithm that decides whether or not a given polynomial equation (a finite system of polynomial equations) with coefficients in some subring $R_0$ of  $R$  has a solution in $R$. In this case elements of $R_0$ must be recognizable by computers, so we always assume that $R_0$ is a computable ring. Now we mention  a few principal  results on Diophantine problems in rings. For the following fields the  Diophantine problem  is decidable, in fact, the whole their  first-order theory is decidable: the field of complex numbers
$\mathbb{C}$ where   coefficients are taken in the algebraic closure of the  rationals $\bar{\mathbb{Q}}$, 
the field of real numbers $\mathbb{R}$, where coefficients are computable reals,  and  $p$-adic numbers
$\mathbb{Q}_p$ \cite{ax3, ershov}, where coefficients are computable $p$-adics. On the other hand, 
 undecidability of the Diophantine problem was proved for rings of polynomials $R[X]$ over an integral domain $R$ \cite{denef,denef2},
and for  rings of Laurent polynomials $R[X,X^{-1}],$  \cite{pappas,pheidas}.  

A major open problem  is the Diophantine problem (sometimes called  generalized  Hilbert's tenth problem) for the field ${\mathbb Q}$ of rational
numbers (see  a comment on this  in the next section). A  survey of the results on the undecidability of existential theories of rings and fields is given in \cite{pheidas_zahidi}.

Let $\MA_K(A)$ be a free associative algebra with basis $A$ over a field $K$.   
An equation with variables in $X = \{x_1, \ldots,x_n\}$ and constants from $\MA_K(A)$ is an expression 
$P(X,A) = 0$
 where $P(X,A)$ is an element from $\MA_K(A\cup X)$.  
 A solution to an equation $P(X,A) = 0$ is a map $\phi:X  \to u_i \in \MA_K(A)$ such that $P(X^\phi,A) = 0$ in $\MA_K(A)$.
 Of course the coefficients of the equation above have to be given by  an effective description, so one may assume that they are taken from a constructive (computable) subring of  $\MA_K(A)$, typically of the form $\MA_{K_0}(A)$, where $K_0$ is a computable subfield of $K$, say the prime subfield of $K$. Of course,  in the case of  undecidable Diophantine problem, the smaller the subfield $K_0$ is the stronger the result. We prove that for any non-empty set $A$ and for any field $K$ the Diophantine problem for $\MA_K(A)$ with coefficients in $\MA_{K_0}(A)$ where $K_0$ is the prime subfield of $K$ is undecidable. To approach this result we  use   Pell's equations of a particular type in $\MA_K(A)$ to interpret arithmetic by equations in $\MA_K(A)$. 
 Observe, that Makanin proved in \cite{Makanin1} that the Diophantine problem in a free monoid is decidable, so in the case when the field $K$ has decidable Diophantine problem (say when $K$ is finite or algebraically closed) undecidability of the Diophantine problem in the algebra $\MA_K(A)$ cannot be seen on the level of coefficients or monomials. We showed in Section \ref{se:bounded-width} that decidability of the Diophantine problem in $K$ is equivalent to solvability of equations when solutions are assumed to have  bounded width (the number of monomials with non-zero coefficients).

One can consider equations and their decidability over arbitrary algebraic structures  $\mathcal{M}$  in a language $L$. 
An equation in  $\mathcal{M}$ is an equality of two terms in $L$:
$$
t(x_1, \ldots, x_n, a_1, \ldots, a_m) = s(x_1, \ldots, x_n, b_1, \ldots, b_k).
$$
with variables $x_1, \ldots,x_n$ and constants $a_1, \ldots, a_m, b_1, \ldots, b_k  \in \mathcal{M}$.  A solution of such an equation is a map  $x_i \to c_i$ from the set of variables into $\mathcal{M}$ which turns the symbolic equality of terms $t = s$ into a true   equality in $\mathcal{M}$. In particular,  one can consider equations in semigroups, groups,  associative or Lie algebras, etc.

The following are the principal    questions on equations in  $\mathcal{M}$:
decidability of single equations and finite systems (Diophantine problems),
equivalence of  infinite systems of equations  in finitely many variables  to some of their  finite subsystems (equationally Noetherian structures),
description of solution sets of finite systems of equations. In  generalized Hilbert's 10th problems the question is usually stated about  decidability of  single equations, since in those cases finite systems of equations are equivalent (have the same solution sets) as suitable single equations. However, in general, this may not be the case, so in the decidability results the statements are stronger for finite systems, while in the undecidable cases,  formulations are stronger for single equations.

The principal questions are solved positively in 
abelian groups (linear algebra), 
 free groups \cite{Makanin},\cite{Razborov}, \cite{irc},
hyperbolic  and toral relatively hyperbolic groups \cite{Rips-Sela},\cite {Dahmani-Groves},
Right angled Artin groups \cite{diek}, \cite{CRK} and free products of groups  with decidable  equations in the factors  (see also  \cite{CRK1}), and some other groups. 

In the second part of the paper we study Diophantine problems in group algebras of torsion-free groups.
Let $K(G)$ be  the  group algebra $K(G)$ of a group $G$ over a field $K$.  We  show that   the Diophantine problem is undecidable in $K(G)$ for any  field $K$ for a wide variety of groups $G$.

The main technical result is Theorem \ref{main} which  states  that if a torsion free group $G$  contains an element $g$ such that 
the centralizer $C_G(g)$ is a countable free abelian  group and 
 $C_G(g^k) = C_G(g)$ for any non-zero integer $k$, then  the Diophantine problem in $K(G)$ is undecidable.  
This covers large variety of torsion-free groups $G.$

 Note that for any group $G$ as above one can effectively construct a particular system of equations with two parameters $\nu,  x \in K(G)$ such that one cannot recognize for which parameters $\nu$ the system has a solution in $K(G)$.

 The results above show that decidability of the Diophantine problem in $K(G)$ does not directly depend  on the Diophantine problems in $K$ and $G$. To clarify the situation in the last section we introduce the notion of the Bounded Width  Diophantine Problem and prove some positive results. Namely, the Bounded Width  Diophantine Problem in $K(G)$ is decidable if and only if  Diophantine problems in $K$ and $G$ are  decidable.  

 Now we would like to state the following 
\begin{conjecture} Let $G$ be a torsion-free group. Then for any field $K$ the Diophantine problem in $K(G)$ is undeciblable.
\end{conjecture}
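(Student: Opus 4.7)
The plan is to extend Theorem~\ref{main} by successively weakening its centraliser hypothesis. The starting observation is that for any element $g$ of infinite order in a torsion-free group $G$, the chain $C_G(g) \subseteq C_G(g^2) \subseteq C_G(g^6) \subseteq \cdots$ has union the \emph{isolator} $C^*_G(g) = \bigcup_{k\geq 1} C_G(g^k)$. Replacing $g$ by a sufficiently divisible power $g^N$ should often make $C_G(g^N) = C_G(g^{NM})$ hold for all nonzero $M$, at least in broad classes of groups (for example, those that are equationally Noetherian), so the second hypothesis of Theorem~\ref{main} can frequently be arranged by passing to a power.

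Next, one would weaken ``countable free abelian'' to ``countable torsion-free abelian''. Every such group is a directed union of finitely generated subgroups, each isomorphic to some $\mathbb{Z}^n$, so $K(C_G(g))$ is a directed union of Laurent polynomial rings $K[\mathbb{Z}^n]$, whose Diophantine problems are already known to be undecidable by \cite{pappas, pheidas}. The task is to locate, inside $C_G(g)$, a finitely generated subgroup of the form $\mathbb{Z}^n$ for which the Pell-equation construction of Theorem~\ref{main} carries through, and then verify that the arithmetic encoded in $K[\mathbb{Z}^n] \subseteq K(C_G(g))$ cannot be destroyed by solutions that live in $K(G) \setminus K(C_G(g))$. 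When no element of $G$ has an abelian centraliser, a different route is required: one exploits the non-abelian torsion-free structure of $C_G(g)$ to embed a suitable non-commutative subalgebra of $K(G)$ and invokes the undecidability result for free associative algebras $\mathbb{A}_K(A)$ established in the first half of the paper.

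The principal obstacle is the \emph{reduction} step present in each of the cases above: showing that undecidability of equations in $K(H)$ for a subgroup $H \leq G$ implies undecidability in $K(G)$. This is not automatic, since an equation with coefficients in $K(H)$ may acquire new solutions in $K(G) \setminus K(H)$. What is needed is an existential interpretation of $K(H)$ inside $K(G)$, typically realised by describing $K(H)$ as the centraliser or normaliser of a specific definable element of $K(G)$, which in turn demands structural information about $G$ that may be unavailable in full generality. Handling torsion-free groups with pathological centraliser structure (for example Tarski-monster-like constructions or certain small-cancellation examples with prescribed centralisers) seems to be the main hurdle standing between the current Theorem~\ref{main} and the full conjecture.
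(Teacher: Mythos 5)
This statement is stated in the paper as an open \emph{conjecture}; the paper offers no proof of it, and your proposal is a research programme rather than a proof. Each of its three stages has a genuine gap. First, the passage from ``countable free abelian'' to ``countable torsion-free abelian'' centralizers does not go through as described: if $C_G(g)$ is, say, isomorphic to $\mathbb{Q}$, then $K(C_G(g))$ is a directed union of Laurent polynomial rings $K[t,t^{-1}]$ in which $t$ acquires roots of every order, and the determination of the solution set of the Pell equation $X^2-(t^2-1)Y^2=1$ in \cite{pappas,pheidas} breaks down --- extra solutions coming from fractional powers of $t+u$ can destroy the identification of $\{Y(1)\}$ with $\mathbb{Z}$. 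Undecidability is not inherited by directed unions: an existential sentence true in some stage is true in the union, but the \emph{interpretation} of arithmetic requires exact control of the full solution set in the big ring, which is precisely what is lost. Likewise, your suggestion to replace $g$ by a ``sufficiently divisible power'' $g^N$ to force $C_G(g^N)=C_G(g^{NM})$ presupposes that the chain $C_G(g)\subseteq C_G(g^2)\subseteq\cdots$ stabilizes, which need not happen in a general torsion-free group.

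Second, and more fundamentally, you correctly identify the reduction step --- passing from undecidability in $K(H)$ for $H\le G$ to undecidability in $K(G)$ --- as the principal obstacle, but you do not overcome it. The paper's entire method rests on $K(H)$ being e-definable in $K(G)$ as a centralizer whose ring structure is \emph{known} (a Laurent polynomial ring, via Lemma~\ref{pr:centralizer-hyp}); without such a definable copy, Lemma~\ref{le:reductions} gives nothing, since an equation over $K(H)$ may gain solutions in $K(G)\smallsetminus K(H)$. Finally, in the case where no element has abelian centralizer, the appeal to the free associative algebra result is unsupported: Theorem~\ref{th:undec-eq} uses that in $\mathbb{A}_K(X)$ the centralizer of a non-invertible element is the polynomial ring $K[t]$ (a fact specific to free algebras), and there is no reason a group algebra of a torsion-free group with non-abelian centralizers should e-interpret a free associative algebra or a polynomial ring. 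These are exactly the open difficulties that separate Theorem~\ref{main} from the conjecture, so the proposal should be regarded as a (partly reasonable, partly flawed) plan of attack, not a proof.
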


\section{E-interpretability}

Recall that $A\subset \mathcal{M}^n$ is called {\em e-definable} (equationally definable) or {\em Diophantine}  in $\mathcal{M}$ if there exists a finite system of equations $\Sigma(x_1,\ldots,x_n,y_1, \ldots,y_m)$ such that 
$(a_1,\ldots,a_n) \in A$ if and only the system  $ \Sigma(a_1,\ldots,a_n,y_1, \ldots,y_m)$ in variables $y_1, \ldots,y_m$ has a solution in $\mathcal{M}$. In other words,  Diophantine sets are projections of algebraic sets defined by finite systems of equations.

We note that if $D_1$ and $D_2$ are Diophantine in an integral domain $R$ then intersections and unions of Diophantine sets are again Diophantine. Indeed, for polynomials $f_1, f_2$ with coefficients in $R$ one has 
 $f_1=0\vee f_2=0$ if and only if  $f_1f_2=0$, while the conjunction of two finite systems of equations is equivalent to the union of these systems. Furthermore, if $R$ is such that the field of quotients of $R$ is not algebraically closed then conjunction of two equations is equivalent to a single equation (see \cite{denef}).

Now we define an important notion of interpretations by equations.

\begin{definition}(E-interpretation or Diophantine interpretation)
Let $\mathcal{A}$ and $\mathcal{M}$ be algebraic structures. A map $\phi:X\subset \mathcal{M}^n \to \mathcal{A}$ is called an e-interpretation of $\mathcal{A}$ in  $\mathcal{M}$ if 
\begin{itemize}
\item [1)] $\phi$ is onto;
\item [2)] $X$ is e-definable in $\mathcal{M}$;
\item [3)] The preimage of $"="$ in $\mathcal{A}$ is e-definable in $\mathcal{M}$; 
\item [4)] The preimage of the graph of every function and predicate  in $\mathcal{A}$ is e-definable in $\mathcal{M}$.
\end{itemize}
\end{definition}
For algebraic structures $\mathcal A$ and $\mathcal M$ we write $\A \to_e \M$ if $\A$ is e-interpretable in $\M$. 

\medskip
\noindent
{\bf Examples.} {\it The following are known examples of e-interpretability: 
\begin{itemize}
\item [1)] $\N$ is Diophantine in $\Z$ since 
$$  x \in \mathbb{N} \Longleftrightarrow \exists y_1, \ldots, y_4 (x = y_1^2 + \ldots +y_4^2);$$ 
\item [2)] the ring $\Q$ is e-interpretable in $\Z$ as a field of fractions. 
\item [3)] $\Z$ is e-interpretable in $\N$;
\item [4)] The structure $\langle \Z ; +, \mid \rangle$, where $\mid$ is the predicate of division, is e-iterpretable in $\Z$.
\end{itemize}
}

\begin{lemma}\label{le:ideal-interp}
The following hold:
\begin{itemize}
\item [1)] Let $R$ be a ring or a group and $Y = \{y_1, \ldots ,y_n\}$ a finite subset of $R$ then the centralizer $C_R(Y) = \{x \in R \mid xy_1 = y_1x, \ldots  xy_n = y_nx\}$ is Diophantine  in $R$;
\item [2)] If $R$ is a commutative associative unitary ring and $Y$ a finite subset of $R$ then the ideal $\langle Y \rangle$ generated in $R$ by $Y$ is Diophantine  in $R$.
 \item [3)] If $I$ is a Diophantine ideal in $R$ then the quotient ring $R/I$ is e-interpretable in $R$.
\end{itemize}
\end{lemma}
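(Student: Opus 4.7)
The plan is to treat each of the three parts in turn, noting that parts (1) and (2) are essentially direct from the definitions while part (3) reduces to a routine check of the e-interpretation clauses.

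For part (1), I would observe that the centralizer $C_R(Y)$ is actually cut out by the finite system of equations $xy_1 = y_1 x, \ldots, xy_n = y_n x$ with no auxiliary existentially quantified variables at all, so it is Diophantine by the very definition (an e-definable set without any projection). This works uniformly whether $R$ is a ring or a group, since only the relevant multiplicative equalities are used.

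For part (2), I would use the standard description of the ideal generated by a finite set in a commutative unitary ring: $x \in \langle Y \rangle$ if and only if there exist $r_1, \ldots, r_n \in R$ such that $x = r_1 y_1 + \cdots + r_n y_n$. This is a projection of the algebraic set defined by the single equation $x - \sum_i r_i y_i = 0$ onto the variable $x$, so $\langle Y \rangle$ is Diophantine in $R$. Commutativity and the presence of $1$ are used to ensure this simple form suffices (otherwise one would need two-sided sums $\sum r_i y_i s_i$, which is still Diophantine but a bit more to write down).

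For part (3), the plan is to exhibit the canonical projection $\pi: R \to R/I$ as the e-interpretation $\phi$, and to verify the four clauses of the definition. Surjectivity and e-definability of the domain $R$ (trivially by $x=x$) are immediate. The preimage of the equality relation on $R/I$ is $\{(x,y) \mid x-y \in I\}$, which is Diophantine in $R$ because $I$ is Diophantine and the substitution $z := x-y$ is a polynomial one; concretely, if $u \in I \iff \exists \bar{t}\, \Sigma(u,\bar{t})=0$, then $\phi(x)=\phi(y) \iff \exists \bar{t}\, \Sigma(x-y,\bar{t})=0$. The preimages of the graphs of $+$ and $\cdot$ are handled the same way: $\phi(x)+\phi(y)=\phi(z)$ iff $(x+y)-z \in I$, and $\phi(x)\phi(y)=\phi(z)$ iff $xy-z \in I$, both of which are Diophantine by the same substitution trick.

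There is no real obstacle here — the content of the lemma is that Diophantine sets are preserved under the obvious syntactic constructions (systems of equations, projections, polynomial substitutions), and part (3) is the standard fact that quotients by Diophantine ideals are e-interpretable. The only point worth being careful about is in part (2) ensuring that the ring is commutative and unitary so that single one-sided $R$-linear combinations already generate the ideal; if this is relaxed, one simply enlarges the list of existentially quantified coefficients accordingly.
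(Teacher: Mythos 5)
Your proposal is correct and follows essentially the same route as the paper: part (1) is immediate from the defining equations, part (2) uses the representation $x = \sum_i r_i y_i$ with existentially quantified coefficients, and part (3) reduces to showing that the coset equivalence $x \simeq_I y \Leftrightarrow x - y \in I$ is Diophantine, the ring operations on $R/I$ being induced from $R$. Your substitution $z := x - y$ into the Diophantine definition of $I$ is a minor streamlining of the paper's formulation $\exists z\,(z \in I \wedge a = b + z)$, but the content is identical.
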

\begin{proof}
The first statement follows directly from the definition of centralizers. To see 2) put $Y = \{y_1, \ldots,y_n\}$ and observe that $x \in \langle Y \rangle$ if and only if $\exists z_1 \ldots \exists z_n (x = y-1z_1 + \ldots +y_nz_n)$, hence $ \langle Y \rangle$ is Diophantine in $R$. To prove 3) it suffices to show that the equivalence relation $a \simeq_I b  \leftrightarrow a \in b+I$ is Diophantine in $R$ since the ring operations on $R/I$ are induced from $R$. To see former observe that $a \in b +I \leftrightarrow \exists z(z \in I \wedge a = b+z)$. So the relation $\simeq_I$ is defined by as a projection of the solution set of a system of  two equations. Notice that if $R$ is an integral domain then one can take a single equation.
\end{proof}

The following result is easy, but useful.
\begin{lemma}(Transitivity of Diophantine interpretation) \label{le:transitive}
For algebraic structures  $\A, \B, \C$  if $\A \to_e\B$ and $ \B \to_e \C$ then $\A \to_e \C$.
\end{lemma}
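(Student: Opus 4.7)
The plan is to compose the two given e-interpretations in the obvious way. Let $\phi \colon X \subseteq \B^n \to \A$ realize $\A \to_e \B$ and let $\psi \colon Y \subseteq \C^m \to \B$ realize $\B \to_e \C$. I would define
\[
\tilde X = \{(\bar c_1, \ldots, \bar c_n) \in \C^{nm} : \bar c_i \in Y \text{ for every } i,\ (\psi(\bar c_1), \ldots, \psi(\bar c_n)) \in X\}
\]
and set $\tilde \phi(\bar c_1, \ldots, \bar c_n) := \phi(\psi(\bar c_1), \ldots, \psi(\bar c_n))$, and then check that $\tilde\phi$ is an e-interpretation of $\A$ in $\C$. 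Surjectivity of $\tilde\phi$ is immediate from surjectivity of $\phi$ and $\psi$, and the remaining three conditions all reduce to the same technical fact.

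That fact, which I view as the main step, is a pullback principle: if $S \subseteq \B^k$ is e-definable in $\B$, then its $\psi$-preimage
\[
\tilde S = \{(\bar c_1, \ldots, \bar c_k) \in (\C^m)^k : \bar c_i \in Y \text{ for all } i,\ (\psi(\bar c_1), \ldots, \psi(\bar c_k)) \in S\}
\]
is e-definable in $\C$. I would prove this by induction on the defining system $\Sigma$ of $S$. Each occurrence of a basic function symbol of $\B$ inside a term is replaced by a fresh $\C^m$-block of variables together with the finite $\C$-system that e-defines the $\psi$-preimage of the graph of that function (available by condition 4 of the $\psi$-interpretation); each atomic equality $t = s$ in $\B$, after fully unfolding both sides, is replaced by the e-defining $\C$-system for the $\psi$-preimage of the equality relation on $\B$ (condition 3); and each existentially quantified $\B$-variable is replaced by its $\C^m$-code together with the defining equations of $Y$ (condition 2). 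The result is a finite system of $\C$-equations whose projection onto the first $km$ coordinates is exactly $\tilde S$.

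With this lemma in hand, the three remaining verifications are one-line applications: pulling back the defining system of $X$ yields e-definability of $\tilde X$ in $\C$ (condition 2 for $\tilde\phi$); pulling back the e-defining system of the $\phi$-preimage of equality in $\A$ yields the corresponding system for $\tilde\phi$ (condition 3); and pulling back the e-defining system of the $\phi$-preimage of each function/predicate graph of $\A$ yields condition 4.

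The only real obstacle is the bookkeeping inside the pullback lemma, namely naming auxiliary variables as one unfolds nested terms of $\B$ into $\C$-equations. This is genuinely routine, since the class of e-definable sets is manifestly closed under conjunction (take unions of systems) and under existential quantification (leave the auxiliary variables unprojected), and these are the only logical operations involved in translating from $\B$ to $\C$.
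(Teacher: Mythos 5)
Your proof is correct: the paper states this lemma without proof (calling it ``easy, but useful''), and your composition-of-interpretations argument, reducing everything to the pullback principle and the closure of e-definable sets under conjunction (union of systems) and existential quantification (unprojected auxiliary variables), is exactly the standard argument the authors intend. The only detail worth adding is that constants of $\B$ occurring in a defining system must also be replaced by chosen codes in $Y$ (which exist since $\psi$ is onto), but this is the same routine bookkeeping you already acknowledge.
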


The following result gives the main technical tool in our study of Diophantine problems.
\begin{lemma}\label{le:reduction}
  Let $\phi:X\subset \mathcal{M}^n\to \mathcal{A}$ be a Diophantine interpretation of $\mathcal{A}$ in $\mathcal{M}$. Then  there is an effective procedure that given   a finite system of equations $S = 1$ over $\mathcal{A}$ constructs an equivalent system of equations $S' = 1$ over $\mathcal{M}$, such that  $\bar a$ is a solution of $S = 1$ in $\mathcal{A}$ iff  $\phi^{-1}(\bar a)$ is a solution of $S' = 1$ in $\mathcal{M}$.
  \end{lemma}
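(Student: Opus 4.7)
The plan is to unfold each term and each equation of $S$ into a system of equations over $\mathcal{M}$, using the finite defining systems supplied by the e-interpretation $\phi$. First, I would fix once and for all finite systems $\Sigma_X(\bar y)$, $\Sigma_{=}(\bar y, \bar z)$ and, for each function or predicate symbol $f$ of arity $k$ in the signature of $\mathcal{A}$, a finite system $\Sigma_f(\bar y_1, \ldots, \bar y_k, \bar y)$ over $\mathcal{M}$, whose solution sets respectively define $X$, the $\phi$-preimage of equality on $\mathcal{A}$, and the $\phi$-preimage of the graph of $f$. The existence of these systems is guaranteed by clauses (2), (3), (4) of the definition of e-interpretation.

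Given variables $x^{(1)}, \ldots, x^{(r)}$ of $S$ ranging over $\mathcal{A}$, I would introduce fresh tuples $\bar y^{(j)} = (y^{(j)}_1, \ldots, y^{(j)}_n)$ of variables over $\mathcal{M}$ and insert one copy of $\Sigma_X(\bar y^{(j)})$ for each $j$, to force $\bar y^{(j)} \in X$. Terms are then processed by induction on complexity: to every subterm $t$ occurring in $S$ I associate a fresh tuple $\bar y_t$ and a finite system $\Sigma_t$, constructed so that, for any evaluation of the $\bar y^{(j)}$ in $X$, some extension by auxiliary variables makes $\bar y_t$ satisfy $\Sigma_t$ if and only if $\phi(\bar y_t) = t^{\mathcal{A}}(\phi(\bar y^{(1)}), \ldots, \phi(\bar y^{(r)}))$. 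For a variable $x^{(j)}$ one simply sets $\bar y_t := \bar y^{(j)}$ and $\Sigma_t := \emptyset$; for a composite term $t = f(t_1, \ldots, t_k)$ one sets
$$\Sigma_t \;:=\; \Sigma_{t_1} \cup \cdots \cup \Sigma_{t_k} \cup \Sigma_f(\bar y_{t_1}, \ldots, \bar y_{t_k}, \bar y_t).$$
Finally, each equation $t = s$ of $S$ is replaced by $\Sigma_t \cup \Sigma_s \cup \Sigma_{=}(\bar y_t, \bar y_s)$, and $S'$ is declared to be the union of all these systems together with the domain constraints $\Sigma_X(\bar y^{(j)})$.

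By construction, $S'$ is a finite system of equations over $\mathcal{M}$, effectively computed from $S$, and the correctness splits into two routine directions: any solution of $S'$ in $\mathcal{M}$ projects through $\phi$ to a solution $\bar a$ of $S$ in $\mathcal{A}$, and conversely any solution $\bar a$ of $S$ lifts to a solution of $S'$ by choosing arbitrary preimages $\bar y^{(j)} \in \phi^{-1}(a_j)$ and then using the defining systems $\Sigma_f$ and $\Sigma_{=}$ to supply witnesses for the auxiliary tuple variables. There is no real obstacle here; the only thing to be careful about is the bookkeeping of the fresh auxiliary variables and the observation that, because $\phi$ is only required to be surjective, the equivalence between $S$ and $S'$ is naturally formulated up to projection along $\phi$ — which is exactly what the statement " $\phi^{-1}(\bar a)$ is a solution of $S'$" records.
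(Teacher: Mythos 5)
The paper states this lemma without proof, so there is nothing to compare against; your term-unfolding construction is the standard argument that the authors implicitly rely on, and it is correct. The only point your induction omits is the base case for constants $a\in\mathcal{A}$ occurring in $S$: these must be handled either as $0$-ary function symbols (so that $\phi^{-1}(a)$ is e-definable by clause (4)) or by fixing an effectively given preimage tuple in $\mathcal{M}^n$ for each coefficient; with that added, the bookkeeping and the two directions of the equivalence go through exactly as you describe.
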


\begin{lemma}\label{le:reductions}
 Let $\A$ and $\M$ be algebraic structures such that $\A \to_e \M$. Then the following holds:
\begin{itemize}
\item [1)] if the Diophantine problem  in $\mathcal{A}$ is undecidable then it is undecidable in $\mathcal{M}$.
\item [2)] if the Diophantine problem  in $\mathcal{M}$ is decidable then it is decidable in $\mathcal{A}$.
\end{itemize}
\end{lemma}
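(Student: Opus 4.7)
The plan is to derive both statements directly from Lemma~\ref{le:reduction}, which provides the effective reduction of finite systems of equations over $\A$ to equivalent finite systems over $\M$ along a given e-interpretation. Observe that the two items of the lemma are contrapositives of each other, so it suffices to prove item~2) carefully and note that item~1) follows.

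For item~2), suppose we have a decision algorithm $\mathcal{D}_\M$ for the Diophantine problem in $\M$. Given an arbitrary finite system of equations $S = 1$ over $\A$ (with coefficients in an appropriate computable subset of $\A$), I would apply the effective procedure from Lemma~\ref{le:reduction} to obtain a finite system $S' = 1$ over $\M$ such that $S = 1$ has a solution in $\A$ if and only if $S' = 1$ has a solution in $\M$. Running $\mathcal{D}_\M$ on $S' = 1$ then decides solvability of $S = 1$ in $\A$. This composition is effective because both the reduction and $\mathcal{D}_\M$ are, giving a decision procedure for the Diophantine problem in $\A$.

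For item~1), I would argue by contrapositive: if the Diophantine problem in $\M$ were decidable, then by item~2) it would be decidable in $\A$, contradicting the hypothesis. Equivalently, a direct reduction works: any hypothetical decision procedure for $\M$ combined with the effective translation of Lemma~\ref{le:reduction} would yield a decision procedure for $\A$, so undecidability is transported from $\A$ up to $\M$.

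There is no real obstacle here beyond bookkeeping: the content of the lemma is entirely packaged inside Lemma~\ref{le:reduction}. The only minor point to verify is that the reduction from Lemma~\ref{le:reduction} indeed uses only coefficients from a computable subset of $\M$ when applied to systems with computable coefficients from $\A$, which is built into the definition of e-interpretation (the defining systems in clauses 2)--4) are fixed finite systems over $\M$, and substituting images of computable constants of $\A$ produces computable data in $\M$).
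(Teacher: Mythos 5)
Your proof is correct and follows exactly the intended route: the paper states this lemma without proof as an immediate consequence of Lemma~\ref{le:reduction}, and your argument (compose the effective translation with a decider for $\M$ to get item~2, then take the contrapositive for item~1) is precisely the standard reduction the authors have in mind.
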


In this connection it is interesting to return to the question on decidability of equations in $\mathbb Q$. A natural  approach might seek a  positive answer to the question by proving that  the set ${\mathbb Z}$ is Diophantine  in the field ${\mathbb Q}$, thus showing  undecidability of Diophantine problem for ${\mathbb Q}$.  But the following
observations seem to make such an expectation unlikely.
All known examples of algebraic varieties over ${\mathbb Q}$ have the property that the real topological
closure of the Zariski closure of their rational (over ${\mathbb Q}$) points has finitely many
connected components.
In consequence Mazur asked whether this is true for all algebraic varieties \cite{mazur}.
He also stated a more general similar statement (where the real topology
is replaced  by $p$-adic topologies). These questions remain open. If the Mazur's conjecture is true then  ${\mathbb Z}$ is not Diophantine  in  ${\mathbb Q}$ because finitely many components project onto finitely many components.
 Some specialists doubt the truth of Mazur's question (mainly because the
analogue of the $p$-adic version fails in global fields of positive characteristic). But still,
most specialists expect that ${\mathbb Z}$ is not Diophantine  in  ${\mathbb Q}$.

\section{Equations in various polynomial rings}

\subsection{Equations in polynomial rings}

Recall that Chebyshev's polynomials of the first and the second kind, respectively $T_n(x)$ and $U_n(x)$, are defined recurrently as integer polynomials from $\Z[x]$ as follows:
$$
T_0 = 1, \ T_1 = x, \ T_{n+1}(x)  = 2xT_n(x)  -T_{n-1}(x),
$$
$$
U_0 = 1, \ U_1 = 2x, \ U_{n+1}(x) = 2xU_n(x) - U_{n-1}(x).
$$

The following result was shown in \cite{denef}. 
\begin{lemma} \cite{denef} \label{le:Pell}
Let $R$ be a domain of zero characteristic and $R[t]$ a polynomial ring in one variable $t$ with coefficients in $R$. Then the solution set of the Pell's equation $ X^2 -(t^2-1)Y^2 = 1$ in $R[t]$ consists precisely of the pairs 
$$P = (\pm T_n,\pm U_{n-1}),  \ \ \ n \in \mathbb{N},$$
 where $T_n, U_n \in \mathbb{Z}[t]$  are  Chebyshev's polynomials of the first and the second kind respectively.
\end{lemma}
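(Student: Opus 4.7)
The forward direction --- that each $(\pm T_n,\pm U_{n-1})$ solves $X^2-(t^2-1)Y^2=1$ in $R[t]$ --- reduces to the Chebyshev identity
\[
T_n(t)^2 - (t^2-1)\,U_{n-1}(t)^2 \;=\; 1, \qquad n \geq 0,
\]
taken with the convention $U_{-1}=0$. I would establish this identity by induction on $n$ directly from the given recurrences: the cases $n=0,1$ are immediate, and the inductive step uses $T_{n+1}=2tT_n-T_{n-1}$ together with $U_n=2tU_{n-1}-U_{n-2}$ and a short algebraic manipulation. Since $\MZ\hookrightarrow R$ canonically (using $\mathrm{char}\,R=0$), the identity persists in $R[t]$.

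For the converse, I would pass to the quadratic extension $\widetilde R := R[t][s]$ with $s^2=t^2-1$. Noting that $t^2-1$ is not a square in the field of fractions of $R[t]$ (else, in $p^2=(t^2-1)q^2$, the distinct linear factors $t\pm 1$ would simultaneously have even and odd multiplicities), the ring $\widetilde R$ is a domain, and the involution $\sigma:X+Ys\mapsto X-Ys$ provides a multiplicative norm $N(X+Ys)=X^2-(t^2-1)Y^2$. Solutions to the Pell equation therefore form a group under Brahmagupta composition, and the plan is to show this group is generated by $-1$ and $\tau := t+s = T_1 + U_0\cdot s$. Concretely, I will show that every solution reduces, after finitely many multiplications by $\tau^{-1}=t-s$ (possibly combined with $\sigma$), to $(\pm 1,0)$.

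The engine is a descent on $n := \deg X$. Writing $X=at^n+ct^{n-1}+\cdots$ and $Y=bt^m+dt^{m-1}+\cdots$ with $a,b\neq 0$, degree-matching in $X^2-(t^2-1)Y^2=1$ forces $m=n-1$; the leading-coefficient equation gives $a^2=b^2$, hence $b=\pm a$ since $R$ is a domain, and comparing the $t^{2n-1}$-coefficients (after replacing $Y$ by $-Y$, if needed, to arrange $b=a$) yields $2ac=2ad$, hence $c=d$ using both $\mathrm{char}\,R=0$ and the domain hypothesis. Multiplying $X+Ys$ by $t-s$ then produces
\[
X' = tX-(t^2-1)Y, \qquad Y' = tY - X,
\]
and the two cancellations $b=a$ and $c=d$ make the $t^{n+1}$- and $t^{n}$-coefficients of $X'$ vanish, so $\deg X'\leq n-1$. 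Iterating the descent drives us to the base case $\deg X=0$, where $R$ being a domain forces $Y=0$ (else the right-hand side has positive degree) and $X^2=1$, i.e., $(X,Y)=(\pm T_0,\pm U_{-1})$.

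Reversing the descent, $(X,Y)$ is recovered by multiplying the base solution by $\tau^n$; the identity $(T_{n-1}+U_{n-2}s)(t+s)=T_n+U_{n-1}s$, verified inductively from the Chebyshev recurrences, translates this into $(X,Y)=(\pm T_n,\pm U_{n-1})$ with correlated signs. The main technical care is in the reduction step: checking that after sign-matching one really gets $\deg X'<\deg X$. This is precisely where both hypotheses on $R$ are essential --- the domain hypothesis eliminates pathological cancellations in the leading-coefficient analysis, and characteristic zero turns $2ac=2ad$ into $c=d$. (In characteristic $2$ the statement itself fails, since $t^2-1=(t-1)^2$ and the equation becomes $\bigl(X+(t-1)Y\bigr)^2=1$, admitting many more solutions.)
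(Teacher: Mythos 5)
Your argument is correct, but it takes a genuinely different route from the paper's. The paper follows Denef's function-theoretic proof: it rationally parametrizes the conic $u^2=t^2-1$ by a variable $z$ (so that $t+u=\frac{z+1}{z-1}$), observes that a norm-one element $X+uY$ is a unit of the coordinate ring and hence, as a rational function of $z$, has its zeros and poles concentrated at $z=\pm 1$, and concludes that $X+uY=c\left(\frac{z-1}{z+1}\right)^m=c\,(t+u)^{\pm m}$ with $c^2=1$. You instead run an elementary degree descent in $R[t]$ itself: after normalizing signs so that the leading and subleading coefficients of $X$ and $(t^2-1)Y$ match, multiplication by $t-s$ strictly drops $\deg X$, and the base case $\deg X=0$ forces $(\pm 1,0)$; reversing the descent writes every solution as $\pm(t+s)^n=\pm(T_n+U_{n-1}s)$ up to the involution $\sigma$. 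Your degree bookkeeping is sound (the forced relations $m=n-1$, $b=\pm a$, $c=d$ are exactly what make the two top coefficients of $X'=tX-(t^2-1)Y$ cancel), and you correctly isolate where the domain and characteristic-zero hypotheses enter. What each approach buys: yours is self-contained, avoids passing to the algebraic closure and any appeal to the divisor of a rational function, and visibly works verbatim in any characteristic different from $2$; the paper's pole-and-zero argument is less elementary but is the one that scales to the Laurent polynomial ring $R[t,t^{-1}]$ used in the next subsection, where the extra poles at $z=\pm i$ produce the two-parameter family of solutions and the descent bookkeeping would become considerably messier. Two small points to tidy up in a full write-up: state the squarefreeness of $t^2-1$ over the fraction field of $R[t]$ (so that $\widetilde R$ embeds in a field and is a domain), and make explicit the sign case $(X',Y')=(\epsilon T_k,-\epsilon U_{k-1})$ in the reconstruction step, where multiplying by $t+s$ yields $\epsilon(t-s)^{k-1}$ rather than $\epsilon(t+s)^{k+1}$ --- both land in the asserted solution set, so the conclusion is unaffected.
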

We give a sketch of the proof of this result since we will need some notation and facts in the future. 
Consider the Pell equation in $R[t]$  (the Pell curve)  
\begin{equation}\label{Pell} X^2 -(t^2-1)Y^2 = 1.\end{equation}
 Let $u$ be an element in the algebraic closure of $K[t]$ satisfying \begin{equation}\label{subst} u^2=t^2-1.\end{equation}

Then we have \begin{equation} \label{prod} (X+uY)(X-uY)=1.\end{equation}

  We parametrize the curve (\ref{subst}) by $$t=z^2+1/z^2-1,\ u=2z/z^2-1.$$

As rational functions of $z$, $X+uY, X-uY$ have poles and  zeros only at $z=\pm 1$ as follows from (\ref{prod}). Observe that $(X+uY)(-z)=(X-uY)z$, and so if $X,Y$ is a point on the Pell curve, then 
$$X+uY=c(\frac{z-1}{z+1})^m, \ X-uY=c(\frac{z-1}{z+1})^{-m} ,$$ for some $c\in K$. Substituting these two expressions into (\ref{prod}) yields $c^2=1$. 

For $c=1$ (the case $c=-1$ is similar).

 $$X+uY=(\frac{z-1}{z+1})^m =(t+u)^m. $$ From (\ref{subst}),
$(t-u)^{-m}=(t+u)^m$.

Thus 
 solutions of Pell's equation in   $\MA_K(X)$  if char$K=0$ are precisely  pairs $(\pm X_m, \pm Y_m)$ of the following type:
 
 $$X_m+uY_m=(t+u)^m$$
 $$X_m-uY_m=(t-u)^m,$$
 where  $m\in {\mathbb N}.$ These relations define precisely the Chebyshev's polynomials $X_m=T_m, Y_m=U_{m-1}.$

\begin{theorem} \label{th:e-interp-char-0}
Let $R$ be a domain of characteristic zero and $R[t_1, \ldots,t_n]$ a polynomial ring in finitely many variables  $t_1, \ldots,t_n$ with coefficients in $R$. Then the arithmetic $\Z$ is e-interpretable in the  ring $R[t_1, \ldots,t_n]$.
\end{theorem}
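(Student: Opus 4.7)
The plan is to Diophantinely encode $\mathbb{Z}$ in $R[t_1,\ldots,t_n]$ using the Pell solutions from Lemma \ref{le:Pell} in the single variable $t_1$, and to recover integer arithmetic by evaluating the $Y$-coordinate at $t_1=1$ via the classical identity $U_{k-1}(1)=k$.

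First I would reduce the many-variable case to one variable. Writing $R[t_1,\ldots,t_n]=R'[t_1]$ with $R'=R[t_2,\ldots,t_n]$ again a domain of characteristic zero, Lemma \ref{le:Pell} applied to $R'$ shows that the Diophantine set
\[
P \;=\; \{(X,Y) \in R[t_1,\ldots,t_n]^2 : X^2 - (t_1^2-1)Y^2 = 1\}
\]
equals $\{(\pm T_k(t_1),\pm U_{k-1}(t_1)) : k \in \mathbb{N}\}$; in particular every solution has integer coefficients and depends only on $t_1$.

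Next I would take as interpretation map $\phi\colon P \to \mathbb{Z}$ the evaluation $\phi(X,Y)=Y(1,t_2,\ldots,t_n)=Y(1)$, which lands in $\mathbb{Z}\subset R$ because $Y$ is a Chebyshev polynomial in $t_1$ alone. A one-line induction on the recurrence $U_{k+1}=2xU_k-U_{k-1}$ gives $U_{k-1}(1)=k$, so $\phi$ is surjective onto $\mathbb{Z}$: the fiber over $k\neq 0$ is $\{(\pm T_{|k|},\,\mathrm{sgn}(k)\,U_{|k|-1})\}$ and the fiber over $0$ is $\{(\pm 1,0)\}$.

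Finally I would verify the e-interpretability conditions using the standard fact that $p\in R[t_1,\ldots,t_n]$ satisfies $p(1,t_2,\ldots,t_n)=0$ exactly when $(t_1-1)\mid p$. For $(X_i,Y_i)\in P$:
\begin{itemize}
\item $\phi(X_1,Y_1)=\phi(X_2,Y_2) \iff \exists Z\ (Y_1-Y_2=(t_1-1)Z)$;
\item $\phi(X_1,Y_1)+\phi(X_2,Y_2)=\phi(X_3,Y_3) \iff \exists Z\ (Y_3-Y_1-Y_2=(t_1-1)Z)$;
\item $\phi(X_1,Y_1)\cdot\phi(X_2,Y_2)=\phi(X_3,Y_3) \iff \exists Z\ (Y_3-Y_1Y_2=(t_1-1)Z)$.
\end{itemize}
The crucial point for the last two equivalences is that the right-hand sides are arithmetic identities among integers, and because $R$ has characteristic zero the embedding $\mathbb{Z}\hookrightarrow R$ is faithful, so an equality of integers in $R$ really is the intended equality in $\mathbb{Z}$. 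All three conditions are cut out by a single polynomial equation with an existentially quantified $Z$.

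The main obstacle is conceptual rather than computational: one has to recognize that evaluation at $t_1=1$ turns integer addition and integer multiplication into divisibility by $t_1-1$, so that no extra trickery (no composition identity $T_m\circ T_n=T_{mn}$, no decoding of the index $k$ from $T_k$ beyond $U_{k-1}(1)=k$) is needed. Once $P$ is described as in Lemma \ref{le:Pell}, the ambient ring operations of $R[t_1,\ldots,t_n]$ reduced modulo $(t_1-1)$ already implement $(\mathbb{Z};+,\cdot)$.
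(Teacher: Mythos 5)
Your proposal is correct and follows essentially the same route as the paper: Pell solutions via Lemma \ref{le:Pell} give the Chebyshev pairs, evaluation at $t_1=1$ sends $U_{k-1}$ to $k$, and equality, addition and multiplication of integers are all encoded by divisibility by $t_1-1$. The only (harmless) difference is in the reduction to one variable: the paper quotients by the Diophantine ideal $\langle t_2,\ldots,t_n\rangle$ and invokes transitivity of e-interpretations, whereas you apply Lemma \ref{le:Pell} directly over the coefficient domain $R'=R[t_2,\ldots,t_n]$, which works just as well and slightly streamlines that step.
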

\begin{proof}
Observe that the ideal $I = \langle t_2, \ldots,t_n \rangle $ generated by $t_2, \ldots,t_n$ in $R[t_1, \ldots,t_n]$  is Diophantine in  $R[t_1, \ldots,t_n]$ by Lemma \ref{le:ideal-interp}. Hence again by Lemma \ref{le:ideal-interp} the ring of polynomials $R[t_1] \simeq R[t_1, \ldots,t_n]/I$ is e-interpretable in $R[t_1, \ldots,t_n]$. So by the transitivity of e-interpretations we may assume 
that $n=1$ and consider only the ring $R[t]$.

By Lemma \ref{le:Pell} the set of polynomials $S = \{\pm Y_n(t) \mid n \in \N\}$ is Diophantine in $R[t]$, since
$$
Y \in S  \Longleftrightarrow \exists X (X^2 - (t^2-1)Y^2 = 1).
$$
From the recurrent definition of the polynomials $Y_n(t)$ it follows directly that  $Y_{n}(1) = n+1$, so  the set 
$$ 
Z = \{Y(1) \mid Y \in S \} \cup \{0\} 
$$
is precisely the set of integers $\mathbb{Z}$ in $R$.
Notice that for $f, g  \in \mathbb{Z}[t]$ one has 
$$f(1) = g(1) \Longleftrightarrow \exists h (f-g = h(t-1))$$
   So the equivalence relation $f \sim g \Longleftrightarrow  f(1) = g(1)$ is Diophantine in $R[t]$. Now one can interpret by equations on $S$ the standard arithmetic operations $+, \times$ as follows   
   
   $$
   m+n = k \Longleftrightarrow Y_m + Y_n \sim Y_k,
   $$
    $$
   m \times n = k \Longleftrightarrow Y_m \times  Y_n \sim Y_k.
   $$
   This gives e-interpretation of $\Z$ in $R[t]$.
\end{proof}

 From Theorem \ref{th:e-interp-char-0},  Lemma \ref{le:reductions},  and undecidability of the  Diophantine problem in $\Z$  one has the following result. 
\begin{cor} \cite{denef} \label{co:undec-poly}
Let $R$ be a domain of characteristic zero and $R[t_1, \ldots,t_n]$ a polynomial ring in finitely many variables  $t_1, \ldots,t_n$ with coefficients in $R$. Then   Diophantine problem in $R[t_1, \ldots,t_n]$ is undecidable. 
\end{cor}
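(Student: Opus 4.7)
The plan is essentially to assemble three ingredients that have already been put in place in the paper, so the proof will be a short transitivity argument rather than new technical work.

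First, I invoke Theorem \ref{th:e-interp-char-0}, which produces an e-interpretation $\Z \to_e R[t_1,\ldots,t_n]$. It is important that this interpretation is not merely an abstract existence statement but comes with an explicit finite list of defining systems: the set $S$ of $\pm U_{n-1}(t_1)$, cut out by the Pell equation $X^2-(t_1^2-1)Y^2=1$; the Diophantine equivalence $f\sim g \Leftrightarrow \exists h\,(f-g = h(t_1-1))$; and the two Diophantine predicates expressing addition and multiplication. All of these are finite systems of polynomial equations over $R[t_1,\ldots,t_n]$ whose coefficients lie in $\Z[t_1]\subset R[t_1,\ldots,t_n]$, hence are effectively describable.

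Second, I recall the Matiyasevich--Davis--Putnam--Robinson theorem, which says the Diophantine problem over $\Z$ is undecidable. Combining this with Lemma \ref{le:reductions}(1) applied to the interpretation above immediately yields undecidability of the Diophantine problem in $R[t_1,\ldots,t_n]$. Concretely, Lemma \ref{le:reduction} supplies an algorithm that converts any finite system of polynomial equations over $\Z$ into an equivalent finite system of polynomial equations over $R[t_1,\ldots,t_n]$, such that solvability is preserved; a decision procedure for the latter would therefore contradict Matiyasevich's theorem.

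The only thing to double-check — and I would mention it explicitly in the write-up — is effectivity of coefficients: the reduction in Lemma \ref{le:reduction} only introduces coefficients from $\Z[t_1]$, which sits inside any characteristic-zero domain $R$. Thus undecidability already holds when the coefficients of the input equations are restricted to $\Z[t_1]$, giving the sharpest form of the statement. There is no genuine obstacle; all the real work was done in the Pell-equation analysis and the e-interpretability of $\Z$ inside $R[t]$, which together with transitivity (Lemma \ref{le:transitive}) reduced the multivariable case to the one-variable case via the Diophantine ideal $\langle t_2,\ldots,t_n\rangle$.
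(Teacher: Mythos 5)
Your proposal is correct and follows exactly the paper's route: the paper derives this corollary in one line from Theorem \ref{th:e-interp-char-0}, Lemma \ref{le:reductions}, and Matiyasevich's theorem, which is precisely your argument. Your added remark on the effectivity of the reduction and the fact that all coefficients can be taken in $\Z[t_1]$ is a sensible sharpening but does not change the substance.
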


A similar  result  holds  for  integral domains of positive characteristic as well, but in this case  instead of $\Z$ one interprets a weaker structure $\langle \Z; +, \mid_p\rangle$.
Here $x\mid_ny$, by definition, means that $y = xqn^f$ for some $q, f \in\mathbb Z.$

\begin{theorem} \label{th:e-inter-char-p} \cite{denef2} 
Let $R$ be a domain of characteristic  $p > 1$ and $R[t_1, \ldots,t_n]$ a polynomial ring in finitely many variables  $t_1, \ldots,t_n$ with coefficients in $R$.  Then  $\langle \Z; +, \mid_p\rangle$ is e-interpretable in $R[t_1, \ldots,t_n]$.
\end{theorem}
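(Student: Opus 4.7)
The plan is to mirror the proof of Theorem \ref{th:e-interp-char-0}, adapting the Pell/Chebyshev machinery to positive characteristic. As in that theorem, the ideal $I=\langle t_2,\ldots,t_n\rangle$ is Diophantine in $R[t_1,\ldots,t_n]$ by Lemma \ref{le:ideal-interp}(2), the quotient $R[t_1,\ldots,t_n]/I\simeq R[t_1]$ is e-interpretable by Lemma \ref{le:ideal-interp}(3), and by transitivity (Lemma \ref{le:transitive}) it suffices to e-interpret $\langle \Z;+,\mid_p\rangle$ in the single-variable ring $R[t]$.

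Next I would re-examine the Pell equation $X^2-(t^2-1)Y^2=1$ in $R[t]$. The rational parametrization $t=(z^2+1)/(z^2-1)$, $u=2z/(z^2-1)$ used in the sketch after Lemma \ref{le:Pell} is valid in any characteristic in which $2\neq 0$, so for odd $p$ the solutions in $R[t]$ are still exactly the pairs $(\pm T_m(t),\pm U_{m-1}(t))$, $m\in\N$. (For $p=2$ one works with a suitable modification of the Pell equation, which I suppress as a technicality.) Hence the set $S=\{\pm U_{m-1}(t)\mid m\in\N\}$ is Diophantine in $R[t]$, and a nonzero $Y\in S$ determines its index $m$ uniquely. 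Addition on indices is Diophantine via the group law on Pell solutions: $(T_m+uU_{m-1})(T_n+uU_{n-1})=T_{m+n}+uU_{m+n-1}$ with $u^2=t^2-1$, so ``$k=m+n$'' is cut out by equations over $R[t]$ after separating the $1$- and $u$-parts.

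The genuinely new step is the interpretation of $\mid_p$. In characteristic $p$ the Frobenius gives, for every $f\ge 0$, the identity
\[
(T_m+uU_{m-1})^{p^f}=T_m^{p^f}+u^{p^f}U_{m-1}^{p^f},
\]
which, after isolating the $u$-free and $u$-linear parts and using $u^{p^f}=(t^2-1)^{(p^f-1)/2}u$, yields $T_{mp^f}=T_m^{p^f}$ and $U_{mp^f-1}=(t^2-1)^{(p^f-1)/2}U_{m-1}^{p^f}$. Consequently ``$k=mp^f$ for some $f\in\N$'' is Diophantine: it is equivalent to the existence of a Pell solution $(X',Y')$ lying in the Frobenius-orbit of $(t,1)$, together with the multiplicativity equation $(T_m+uU_{m-1})(X'+uY')=T_k+uU_{k-1}$, which is a single polynomial relation after writing things in the basis $\{1,u\}$. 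Combining this ``multiplication by $p^f$'' predicate with the addition predicate gives $x\mid_p y\iff \exists q\, \exists f\;(y=xqp^f)$ as a Diophantine relation on indices, and the usual differences construction lifts $\langle \N;+,\mid_p\rangle$ to $\langle \Z;+,\mid_p\rangle$.

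The hard part is the single-existential packaging of the unbounded exponent $f$. In characteristic $0$ the trick $U_{m-1}(1)=m$ simply read off the integer $m$ inside $R$; in characteristic $p$ that recovers only $m\bmod p$, which is precisely why one must settle for the weaker structure $\langle \Z;+,\mid_p\rangle$ instead of full arithmetic. The Frobenius identity does exactly the right amount of work: it turns the unbounded parameter $f$ into an existentially quantified Pell solution whose ``size'' is invisible to the finite system. The delicate point is to characterize, Diophantinely, those Pell solutions whose index is a power of $p$ (so that the multiplication step above really produces an index of the form $mp^f$ and not something else); I would do this by imposing that the $X$-coordinate equals $Y$ composed with an iterated $p$-th-powering relation, which is itself Diophantine since $Z\mapsto Z^p$ is a polynomial operation. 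The remaining technicality, to be handled separately, is the case $p=2$, where the half-exponent $(p^f-1)/2$ must be rewritten as an integer expression using a modified Pell equation.
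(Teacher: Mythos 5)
The paper itself gives no proof of this theorem: it is quoted verbatim from Denef \cite{denef2}. Your reconstruction follows the correct skeleton of Denef's argument --- reduction to one variable via Lemma \ref{le:ideal-interp} and Lemma \ref{le:transitive}, the Pell equation $X^2-(t^2-1)Y^2=1$ whose solution set in $R[t]$ is $\{(\pm T_m,\pm U_{m-1}):m\in\N\}$ for $p$ odd, addition of indices via the group law in $R[t][u]$ after separating the $1$- and $u$-components, and the Frobenius identities $T_{mp^f}=T_m^{p^f}$, $U_{mp^f-1}=(t^2-1)^{(p^f-1)/2}U_{m-1}^{p^f}$. All of that is sound and is indeed how the cited proof is organized.

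The gap is in the one step you yourself flag as delicate and then dispose of in a sentence: the claim that the Frobenius orbit of $(t,1)$ --- equivalently the relation $\{(Z,Z^{p^f}):f\in\N\}$ --- is Diophantine ``since $Z\mapsto Z^p$ is a polynomial operation.'' That is a non sequitur. For each \emph{fixed} $f$ the graph of $Z\mapsto Z^{p^f}$ is Diophantine, but what you need is the union over all $f$ expressed by a single finite system with one extra existential block, and polynomiality of a single Frobenius application gives no handle on that unbounded iteration. The proposed remedy (``impose that the $X$-coordinate equals $Y$ composed with an iterated $p$-th-powering relation'') is circular: the iterated-Frobenius relation is precisely the unbounded-exponent relation whose Diophantine definability is at stake. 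This is the central difficulty of \cite{denef2}, and it is overcome there not by an appeal to Frobenius but by a dedicated lemma giving an explicit Diophantine characterization of the pairs of Pell solutions whose indices differ by a factor $p^s$, proved through divisibility and congruence properties of the polynomials $T_n,U_n$ (together with the evaluation $U_{n-1}(1)=n\bmod p$, which, as you correctly observe, is all that survives of the characteristic-zero trick). Until such a lemma is supplied, the interpretation of $\mid_p$ --- which is the entire content of the theorem, since the positive existential theory of $\langle\Z;+\rangle$ alone is decidable --- is not established. The deferred case $p=2$ is also a genuine case in \cite{denef2}, treated with a modified conic, but that is secondary to the main gap.
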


The following result is an easy  generalization of Denef's results from \cite{denef,denef2} on undecidability of Diophantine problems in polynomial rings in one variable and coefficients from an in integral domain.

\begin{theorem}  \label{co:undec-poly}
Let $R$ be an integral  domain, $T$ a non-empty finite or  countable set of variables, and    $R[T]$ a polynomial ring in variables  from $T$ and  with coefficients in $R$. Then   Diophantine problem in $R[T]$ is undecidable.
\end{theorem}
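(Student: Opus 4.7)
The plan is to split the problem into two stages: first dispatch the finite-variable case using the machinery already built up in the section, then reduce the countable case to the finite one by a simple specialization argument.

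\textbf{Finite case.} When $T = \{t_1,\dots,t_n\}$ is finite, I would separate on the characteristic of $R$. If $\mathrm{char}(R) = 0$, the result is already Corollary \ref{co:undec-poly} (obtained by combining Theorem \ref{th:e-interp-char-0} with undecidability of Hilbert's tenth problem via Lemma \ref{le:reductions}). If $\mathrm{char}(R) = p > 0$, Theorem \ref{th:e-inter-char-p} gives an e-interpretation of $\langle \Z;+,\mid_p\rangle$ in $R[t_1,\dots,t_n]$, and since the Diophantine problem in that structure is known to be undecidable (the main content of \cite{denef2}), Lemma \ref{le:reductions} transports undecidability to $R[t_1,\dots,t_n]$.

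\textbf{Countable case.} Now assume $T$ is countably infinite. Fix a single variable $t_1 \in T$ and consider the $R$-algebra retraction
\[
\pi \colon R[T] \longrightarrow R[t_1], \qquad t_1 \mapsto t_1, \quad t \mapsto 0 \text{ for } t \in T \setminus \{t_1\}.
\]
Given any finite system $\Sigma(x_1,\dots,x_k) = 0$ of equations with coefficients in $R[t_1]$, view it as a system over $R[T]$. A solution in $R[t_1]$ is automatically a solution in $R[T]$; conversely, if $(g_1,\dots,g_k) \in R[T]^k$ solves $\Sigma$ in $R[T]$, then applying $\pi$ coordinatewise yields $(\pi(g_1),\dots,\pi(g_k)) \in R[t_1]^k$, which solves $\Sigma$ in $R[t_1]$ because $\pi$ fixes every coefficient of $\Sigma$. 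Hence $\Sigma$ has a solution in $R[t_1]$ if and only if it has a solution in $R[T]$, so an algorithm for the Diophantine problem in $R[T]$ would also decide the Diophantine problem in $R[t_1]$, which contradicts the finite case.

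\textbf{Main obstacle.} There is no deep obstacle here: the heart of the argument was already done in Theorems \ref{th:e-interp-char-0} and \ref{th:e-inter-char-p}. The only point that requires care is the conceptual observation that although $R[T]$ has infinitely many indeterminates, every individual system of equations involves only finitely many of them in its coefficients and its unknowns, so the retraction $\pi$ loses no information. A minor bookkeeping point is to ensure that, after enumerating $T$, the subring $R[t_1]$ (or more generally $R_0[t_1]$ for a computable subring $R_0 \subset R$) embeds as a computable subring of $R[T]$, so the reduction between the two Diophantine problems is effective; this is immediate once the enumeration of $T$ is fixed.
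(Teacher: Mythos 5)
Your proof is correct and follows essentially the same route as the paper's: establish undecidability for a single-variable polynomial ring $R[t]$ via Corollary \ref{co:undec-poly} (characteristic $0$) and Theorem \ref{th:e-inter-char-p} together with the undecidability of $\langle \Z;+,\mid_p\rangle$ (characteristic $p$), then reduce the general case to $R[t]$ by the retraction killing all other variables, noting that a system with coefficients in $R[t]$ is solvable in $R[T]$ if and only if it is solvable in $R[t]$.
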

\begin{proof}
It follows from Coriollary \ref{co:undec-poly} and Theorem \ref{th:e-inter-char-p} that Dophantine problem for a polynomial ring $R[t]$ in one variable $t$ and coefficients from $R$ is undecidable. Now for an element $t \in T$  consider the subring $R[t]$ in the ring $R[T]$.  Obviously, the $R$-homomorphism $\lambda: R[T] \to R[t]$ which is induced by a map $t \to t$, $T\smallsetminus \{t\} \to 0$ is a retract of $R[T]$ onto $R[t]$. Hence any equation $P(X,A) = 0$ in variables $X = \{x_1, \ldots,x_n\}$ and coefficients from $R[t]$ has a solution in $R[t]$ if and only if it has a solution in $R[T]$. Indeed, if $x_1 \to f_1, \ldots, x_n\to f_n$ is a solution of $P(X,A) = 0$ in $R[T]$ then $x_1 \to \lambda(f_1), \ldots, x_n\to \lambda(f_n)$ is a solution of $P(X,A) = 0$ in $R[t]$. Conversely,  any solution of $P(X,A) = 0$ in $R[t]$ is also a solution of $P(X,A) = 0$ in $R[T]$. This shows that Diophantine problem in $R[t]$ effectively reduces to Diophantine problem in $R[T]$, hence the latter is undecidable.
\end{proof}

\subsection{Equations in rings of Laurent polynomials}

We will use the following result that was proved in \cite{pappas} for $char(K)=0$ and in \cite{pheidas}  for arbitrary characteristic.
\begin{theorem}\cite{pappas},\cite{pheidas} Let $R$ be an integral domain. Then the following holds:
\begin{enumerate} \item If $char(R)=0$ and $i \in R$, then  $\Z[i]=\Z +i\Z$  is $e$-interpretable in $R[t,t^{-1}]$;
\item If $char(R)=0$ and $i\not\in R$, then  $\Z$  is $e$-interpretable in $R[t,t^{-1}]$;
\item If $char(R)=0$, then the Diophantine problem for $R[t,t^{-1}]$ with coefficients in $\mathbb Z [t]$ is undecidable;
\item If $char(R)=p>1$, then $\langle \Z; +, \mid_p\rangle$ is e-interpretable in $R[t,t^{-1}]$;
\item If $char(R)=p>1$, then the Diophantine problem for $R[t,t^{-1}]$ with coefficients in $K[t]$ where $K$ is the field of elements of $R$, algebraic over  ${\mathbb F}_p$, is undecidable.\end{enumerate}
\end{theorem}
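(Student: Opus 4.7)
The plan is to adapt the scheme of Theorem \ref{th:e-interp-char-0} to the Laurent ring $R[t,t^{-1}]$, with the main new task being the classification of solutions of Pell-type equations when negative powers of $t$ are permitted. For parts (1) and (2), I would study the classical Pell equation $X^2-(t^2-1)Y^2=1$ inside $R[t,t^{-1}]$ in characteristic zero. Given a solution $(X,Y)$, write $X=t^{-N}X_0$, $Y=t^{-N}Y_0$ with $X_0,Y_0\in R[t]$ not both divisible by $t$, so that $X_0^2-(t^2-1)Y_0^2=t^{2N}$ in $R[t]$. Substituting $t=0$ forces $X_0(0)^2+Y_0(0)^2=0$ whenever $N>0$; if $i\notin R$ this is impossible unless both vanish, contradicting coprimality, so $N=0$ and the solution set coincides with that described in Lemma \ref{le:Pell}. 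If $i\in R$, the relation $X_0(0)=\pm i\,Y_0(0)$ admits further solutions, and a parametrization analogous to the Chebyshev one shows the augmented solution set is indexed by $\Z[i]$.

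With the Pell solution set identified, the e-interpretation proceeds as in Theorem \ref{th:e-interp-char-0}: the principal ideal $\langle t-1\rangle$ is Diophantine by Lemma \ref{le:ideal-interp}, so evaluation at $t=1$ extracts a copy of the integers from the polynomials $Y_n(t)=U_{n-1}(t)$, and addition and multiplication transfer through the Chebyshev recurrences $Y_m+Y_n\sim Y_k$, $Y_m\cdot Y_n\sim Y_k$ modulo $\langle t-1\rangle$. For case (1) one additionally uses evaluation at $t=i$ (again as the Diophantine ideal $\langle t-i\rangle$) to recover the Gaussian integers $\Z[i]$ inside $R$.

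For part (4), the characteristic $p$ case, Pell's equation collapses since $(t\pm u)^p=t^p\pm u^p$, destroying the exponential parametrization. Following \cite{pheidas} one replaces the Chebyshev family by a Frobenius-based one: the set $\{t^{p^n}:n\in\N\}$ is cut out by equations involving the $p$-th power map, and from this one e-interprets the weaker structure $\langle\Z;+,\mid_p\rangle$, whose Diophantine problem is already known to be undecidable by \cite{denef2}. Parts (3) and (5) then follow immediately from Lemma \ref{le:reductions} together with the undecidability of the Diophantine problem in $\Z$ (Matiyasevich) and in $\langle\Z;+,\mid_p\rangle$. One must track that the interpreting equations use only coefficients from $\Z[t]$ in characteristic zero (Pell and the ideal $\langle t-1\rangle$), and only coefficients from $K[t]$ in characteristic $p$, as stated.

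The principal obstacle is the characteristic $p$ case: the Pell argument genuinely fails there, and designing the Frobenius-based equation that yields exactly the predicate $\mid_p$ and no stronger relation is the delicate core of \cite{pheidas}. A secondary subtlety is the bifurcation at $i\in R$ in characteristic zero, where one must correctly attribute the extra Pell solutions to a $\Z[i]$-structure rather than treating them as parasitic, and verify that addition and multiplication of Gaussian integers (not just integers) can be read off from the Chebyshev recurrences evaluated at the two points $t=1$ and $t=i$.
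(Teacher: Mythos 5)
Your overall strategy coincides with the paper's (which follows \cite{pappas} in characteristic zero and cites \cite{pheidas} in characteristic $p$): analyze the Pell equation $X^2-(t^2-1)Y^2=1$ in $R[t,t^{-1}]$, read off integers or Gaussian integers by evaluating $Y$ at $t=1$ modulo the Diophantine ideal $\langle t-1\rangle$, transfer $+$ and $\times$ through $Y_m+Y_n\sim Y_k$ and $Y_m\cdot Y_n\sim Y_k$, and defer the positive-characteristic interpretation of $\langle\Z;+,\mid_p\rangle$ to the Frobenius-based machinery of \cite{pheidas}. Your denominator-clearing argument for part (2) --- writing $X=t^{-N}X_0$, $Y=t^{-N}Y_0$ and evaluating $X_0^2-(t^2-1)Y_0^2=t^{2N}$ at $t=0$ --- is a pleasant elementary alternative to the paper's route, which instead parametrizes the conic by $z$ and observes that $X\pm uY$ may now acquire poles and zeros at $z=\pm i$ (corresponding to $t=0$) in addition to $z=\pm 1$. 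Note, however, that your step ``$X_0(0)^2+Y_0(0)^2=0$ is impossible unless both vanish'' really requires that $-1$ not be a square in the fraction field of $R$, which is strictly stronger than $i\notin R$ (consider $R=\Z[2i]$, where $2^2+(2i)^2=0$); the paper sidesteps this by invoking Lemma~1 of \cite{pappas} for precisely the claim that the extra solutions fail to lie in $R[t,t^{-1}]$ when $i\notin R$.

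The genuine gap is in part (1). The nontrivial content there is the explicit classification of \emph{all} Pell solutions in $R[t,t^{-1}]$ when $i\in R$, namely the two-parameter family $X+uY=(t+\epsilon u)^m\bigl(\frac{1-\delta iu}{t}\bigr)^n$ with $\epsilon,\delta=\pm1$ and $m,n\in\N$, from which one computes $Y_{(m,n)}(1)=\epsilon m-\delta ni$ and hence that evaluation at $t=1$ alone already yields all of $\Z[i]$. You assert this classification (``a parametrization analogous to the Chebyshev one shows the augmented solution set is indexed by $\Z[i]$'') without carrying it out, yet it is exactly what is needed to rule out parasitic solutions and to verify that Gaussian-integer addition and multiplication are captured by the relations $Y_{(m,n)}+Y_{(m',n')}\sim Y_{(k,l)}$ and $Y_{(m,n)}\cdot Y_{(m',n')}\sim Y_{(k,l)}$. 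Your proposal to additionally evaluate at $t=i$ via the ideal $\langle t-i\rangle$ is unnecessary once this computation is in hand, and suggests that the $\Z[i]$-structure of the set $\{Y(1)\}$ has not actually been pinned down. The remaining parts (3)--(5), obtained from Lemma \ref{le:reductions} together with undecidability in $\Z$ and in $\langle\Z;+,\mid_p\rangle$, and the bookkeeping of coefficient rings, match the paper.
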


\begin{proof}    
For the case $char (R)=0$ we will follow  \cite{pappas}.

Consider the Pell equation (\ref{Pell}) in $R[t,t^{-1}].$ 
  Let $u$ be an element in the algebraic closure of $R[t,t^{-1}]$ such that $u^2=t^2-1.$

Then we have \begin{equation} \label{prod1} (X+uY)(X-uY)=1.\end{equation}

By \cite{pappas}, $X+uY$ can be written in the form (as an algebraic function of $t$)
$$g(t)/t^r+\sqrt{t^2-1}f(t)/t^s,$$
with $g(t),f(t)\in R[t]$ and $r,s\in{\mathbb N}.$  One can parametrize the curve by $$t=z^2+1/z^2-1,\ u=2z/z^2-1.$$

As rational functions of $z$, $X+uY, X-uY$ have poles and zeros only at $z=\pm 1, z=\pm i$. Observe that $(X+uY)(-z)=(X-uY)z$, and so if $X,Y$ is a point on the Pell curve, then 
$$X+uY=c(\frac{z-1}{z+1})^m (\frac{z-i}{z+i})^n, \ X-uY=c(\frac{z-1}{z+1})^{-m} (\frac{z-i}{z+i})^{-n},$$ for some $c\in R$. Substituting these two expressions into (\ref{prod}) yields $c^2=1$. Consider the case $c=1$ (the case $c=-1$ is similar).
We have $$X+uY=(\frac{z-1}{z+1})^m (\frac{z-i}{z+i})^n=(t+u)^m(\frac{1-iu}{t})^n. $$ From (\ref{subst}),

$$(t-u)^{-m}=(t+u)^m, \ \  (\frac{1-iu}{t})^{-n}=(\frac{1+iu}{t})^n.$$

Thus the solution of Pell's equation is  precisely the set of pairs $(\pm X,\pm Y)$ such that
 
 $$X+uY=(t+\epsilon u)^m(\frac{1-\delta iu}{t})^n$$
 $$X-uY=(t-\epsilon u)^m(\frac{1-i\delta u}{t})^n,$$
 where $\epsilon, \delta =\pm 1, m,n\in {\mathbb N}.$

Now let $S$ denote the ring $\mathbb Z[i][t,t^{-1}]$. 
$S[u]$ is a quadratic extension of $S$. For $\epsilon,\delta$ define two sequences
$$X^{\epsilon,\delta}_{(m,n)}+uY^{\epsilon,\delta}_{(m,n)}=(t+\epsilon u)^m(\frac{1-\delta iu}{t})^n.$$
$$X^{\epsilon,\delta}_{(m,n)}-uY^{\epsilon,\delta}_{(m,n)}=(t+\epsilon u)^{-m}(\frac{1-\delta iu}{t})^{-n}.$$

It follows that for each pair $\epsilon,\delta=\pm 1$ and for every $m,n\in\N$  the pair $X^{\epsilon,\delta}_{(m,n)}, Y^{\epsilon,\delta}_{(m,n)} $ is a solution of the Pell equation, moreover, if $i\in R$ these are all the solutions and they belong to $K[t,t^{-1}]$

 Let  $char R=0$. If $i\in R$, then the solutions of the Pell's equation (\ref{Pell}) in $R[t,t^{-1}]$
are of the form above.  If $i\not\in R$, then the solutions have the same form with $n=0$.
Indeed, it only remains to show that in case $i\not\in R$, for $n\neq 0$ the  above solution does not belong to $R[t,t^{-1}].$ This follows from \cite{pappas}, Lemma 1.

Define  $V\sim W$ if the elements $V,W\in R[t,t^{-1}]$ take on the same values at $t=1$.

The relation $V\sim W$ is Diophantine over $R[t,t^{-1}]$ because $V\sim W$ iff $\exists X\in K[t,t^{-1}]$ such that $V-W=(t-1)X.$

It follows from \cite{pappas} that in the case $char (R)=0$ we have the following:  

(a) If $i\not\in R$, then 
  $\{Y(1): \exists X\  X^2-(t^2-1)Y^2=1, X,Y\in R[t,t^{-1}]  \} = \mathbb{Z}.$

(b) If $i\in R$, then  $\{Y(1): \exists X\   X^2-(t^2-1)Y^2=1, X,Y\in R[t,t^{-1}]  \} = \mathbb{Z}[i].$

Now one can interpret by equations addition and multiplication on these sets as follows
 $$
   m+n = k \Longleftrightarrow Y_m + Y_n \sim Y_k,
   $$
    $$
   m \times n = k \Longleftrightarrow Y_m \times  Y_n \sim Y_k.
   $$

To prove Theorem \ref{th:undec-eq2} in the case of positive characteristic $p$ we use \cite{pheidas}, Theorem 1.1 (i),  that implies that for an integral domain $R$ the Diophantine problem for $R[t,t^{-1}]$ is undecidable (the proof shows that 
$\langle \Z; +, \mid_p\rangle$ is e-interpretable in $R[t,t^{-1}]$).
\end{proof}

An easy generalization of this result is the following.

\begin{cor} \label{co:infinite-Laurent}
Let $R$ be an integral domain and $T$ a non-empty finite or countable set of variables.  Then Diophantine problem in the ring of Laurent polynomials  $R[T,T^{-1}]$ with coefficients in $\mathbb Z [T]$ is undecidable.
\end{cor}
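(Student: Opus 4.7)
The plan is to reduce the Diophantine problem in $R[t,t^{-1}]$ (shown undecidable in the preceding theorem) to the Diophantine problem in $R[T,T^{-1}]$ via a ring retraction, mirroring the argument used in the proof of Theorem \ref{co:undec-poly} for ordinary polynomial rings. It then suffices to exhibit an effective reduction from the one-variable Laurent case, with coefficients in $\mathbb{Z}[t]$, into $R[T,T^{-1}]$ with coefficients in $\mathbb{Z}[T]$.

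First I fix a distinguished variable $t_0 \in T$ (here we use $T \neq \emptyset$) and consider the subring $R[t_0,t_0^{-1}] \subseteq R[T,T^{-1}]$. Next I define the $R$-algebra homomorphism
\[
\lambda: R[T,T^{-1}] \longrightarrow R[t_0,t_0^{-1}]
\]
determined by $\lambda(t_0)=t_0$ and $\lambda(t)=1$ for every $t \in T\smallsetminus\{t_0\}$. The only point differing from the non-Laurent case is that the eliminated variables must be sent to $1$ rather than $0$, since their images have to be invertible; with this choice $\lambda$ is well defined and restricts to the identity on $R[t_0,t_0^{-1}]$, so it is a retraction.

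Now given a finite system $P(X,A)=0$ with coefficients $A \subset \mathbb{Z}[t_0] \subseteq \mathbb{Z}[T]$, observe that $\lambda$ fixes every element of $\mathbb{Z}[t_0]$ pointwise. Therefore a tuple $x_i \mapsto f_i$ in $R[T,T^{-1}]$ solves $P(X,A)=0$ if and only if the tuple $x_i \mapsto \lambda(f_i)$ in $R[t_0,t_0^{-1}]$ solves it: one direction is obtained by applying $\lambda$ to the equalities $P(\bar f,A)=0$, and the other is immediate from the inclusion $R[t_0,t_0^{-1}] \subseteq R[T,T^{-1}]$. This gives an effective many-one reduction of the Diophantine problem in $R[t_0,t_0^{-1}]$ with coefficients in $\mathbb{Z}[t_0]$ to the Diophantine problem in $R[T,T^{-1}]$ with coefficients in $\mathbb{Z}[T]$, and the previous theorem yields undecidability of the latter.

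There is no substantive obstacle in this argument; the only matters requiring care are the choice of substitution values for the eliminated variables (so the retraction stays inside the Laurent ring) and the fact that, since any single equation mentions only finitely many variables from $T$, the case of countably infinite $T$ is handled by exactly the same finite retraction.
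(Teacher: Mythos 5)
Your proposal is correct and follows the same route the paper intends: the paper's proof of this corollary simply says the argument is similar to the retraction argument for ordinary polynomial rings, and you have carried out exactly that reduction, correctly noting the one adaptation needed in the Laurent setting (the eliminated variables must be sent to the unit $1$ rather than to $0$ so that the retraction is well defined on inverses).
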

\begin{proof}
The argument is similar to the one in Theorem \ref{co:undec-poly}.
\end{proof}

\section{Equations in free associative algebras}

In this section we study decidability of equations in free associative algebras  $\MA_K(X)$. 

We start with the following easy  lemmas.

\begin{lemma} \label{le:Diop_operations}
Finite disjunctions and conjunctions of  Diophantine sets in $\MA_K(A)$ are again Diophantine.
\end{lemma}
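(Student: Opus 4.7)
The plan is to mirror the argument for commutative Diophantine sets sketched earlier in the paper, replacing ``integral domain'' with the non-commutative analogue that $\MA_K(A)$ has no zero divisors. This zero-divisor-freeness is classical: using the canonical $\N$-grading of $\MA_K(A)$ by total degree, if $f,g$ are nonzero with top homogeneous components $f_d, g_e$, then $f_d g_e$ is the top component of $fg$ in degree $d+e$ and is nonzero (monomials in the free monoid on $A$ multiply without collapse), so $fg \neq 0$. Once we have this, everything else is the same logical manipulation as in the commutative case.

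For conjunctions, write $D_i \subseteq \MA_K(A)^n$ ($i=1,2$) as
\[
D_i = \{\bar a \mid \exists \bar y_i\ S_i(\bar a,\bar y_i)=0\},
\]
where $S_i$ is a finite system of equations over $\MA_K(A)$ and the auxiliary tuples $\bar y_1,\bar y_2$ are taken disjoint. Then $\bar a \in D_1 \cap D_2$ iff there exists $(\bar y_1,\bar y_2)$ with $S_1(\bar a,\bar y_1)=0$ and $S_2(\bar a,\bar y_2)=0$, which is precisely the existential projection of the finite system $S_1 \cup S_2$ in the combined variables. So $D_1 \cap D_2$ is Diophantine.

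For disjunctions, I would use distributivity to move the ``or'' down to individual equations: writing $S_1 = \{f_1=0,\dots,f_p=0\}$ and $S_2 = \{g_1=0,\dots,g_q=0\}$,
\[
\Bigl(\bigwedge_{i=1}^{p} f_i=0\Bigr) \vee \Bigl(\bigwedge_{j=1}^{q} g_j=0\Bigr) \;\Longleftrightarrow\; \bigwedge_{i,j} \bigl(f_i=0 \vee g_j=0\bigr).
\]
Now the absence of zero divisors in $\MA_K(A)$ lets me collapse each clause: $f_i = 0 \vee g_j = 0$ is equivalent to the single equation $f_i g_j = 0$. Hence $D_1 \cup D_2$ is the existential projection of the finite system $\{f_i g_j = 0 : 1 \le i \le p,\ 1 \le j \le q\}$, and is Diophantine.

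The only substantive point is the zero-divisor-free property; after that the proof is purely formal. Note that, in contrast to the commutative integral-domain argument recalled in the paper, I do not attempt to compress a finite system into a single equation (which would require, e.g., a sum-of-squares trick unavailable in this non-commutative, characteristic-free setting). For Diophantine sets this is unnecessary, since ``finite system'' is built into the definition.
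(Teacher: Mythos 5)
Your proof is correct, but it routes around the one genuinely non-formal idea in the paper's argument. The paper handles conjunctions by compressing two equations into a \emph{single} equation $P_1^2 + aP_2^2 = 0$ (with $a\in A$ a free generator), justified by a degree-parity argument on leading homogeneous components: after any substitution the top component of $P_1^2$ sits in even degree and that of $aP_2^2$ in odd degree, so the sum vanishes only if both summands do. You instead observe that a Diophantine set is by definition a projection of the solution set of a finite \emph{system}, so intersections cost nothing (concatenate the systems with disjoint auxiliary variables), and you handle unions by distributing the disjunction down to pairwise clauses $f_i=0\vee g_jx=0$ and collapsing each with the product trick $f_ig_j=0$, exactly as the paper does for disjunctions. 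Both arguments are valid and both ultimately rest on $\MA_K(A)$ having no zero divisors (which you prove via the grading; the paper asserts it). The trade-off: your version is more elementary and avoids the characteristic-sensitive-looking sum-of-squares device entirely, but it produces a system of $pq$ equations rather than one; the paper's device yields the strictly stronger conclusion that every finite system over $\MA_K(A)$ is equivalent to a \emph{single} equation, which is what makes the eventual undecidability result a statement about single equations rather than about systems --- a strengthening the introduction explicitly cares about. So your proof establishes the lemma as stated, but not the sharper single-equation reduction that the paper's proof delivers as a byproduct.
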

\begin{proof}
It suffices to show that finite disjunctions and conjunctions of equations in $\MA_K(A)$ are equivalent to single equations. Clearly, since $\MA_K(A)$ has no zero divisors, then if $P_1 = 0$ and $P_2 = 0$ are equations in $\MA_K(A)$, then the disjunction  $P_1 = 0 \vee P_2 = 0$ is equivalent to a single equation $P_1\cdot P_2 = 0$ in 
$\MA_K(A)$. On the other hand, the  conjunction  $P_1 = 0 \wedge P_2 = 0$  of two equations in $\MA_K(A)$ is equivalent to an equation of the type $P_1^2 + aP_2^2 = 0$ where $a$ is a constant from $A$. Indeed, in this case the degree of $P_1^2$ is even, and that of $aP_2^2 $ is odd after any substitution of  constants from $\MA_K(A)$ into variables in $P_1$ and $P_2$, unless the both sides become zero. Hence any  solution of $P_1^2 + aP_2^2 = 0$ in $\MA_K(A)$ makes also a solution of the system $P_1 = 0 \wedge P_2 = 0$ as required.

\end{proof}

\begin{lemma} \label{le:field-Diop}
The field $K$ is a Diophantine subset of $\MA_K(X)$. 
\end{lemma}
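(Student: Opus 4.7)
The plan is to describe $K$ as the intersection of two centralizers in $\MA_K(X)$, and then combine Lemma~\ref{le:ideal-interp}(1) (centralizers are Diophantine) with Lemma~\ref{le:Diop_operations} (intersections of Diophantine sets are Diophantine). So the bulk of the work is a structural computation of centralizers in the free associative algebra.

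Assume first that $|X|\geq 2$ and fix two distinct basis elements $a,b\in X$. I would prove
\[
K \;=\; C_{\MA_K(X)}(a)\cap C_{\MA_K(X)}(b).
\]
The inclusion $\supseteq$ is the content-bearing one. I would obtain it by establishing $C_{\MA_K(X)}(a)=K[a]$ and $C_{\MA_K(X)}(b)=K[b]$, either by citing Bergman's centralizer theorem or by giving a direct monomial-basis argument: expand any $f$ as $f=\sum_w c_w w$ in the basis of words in $X$, rewrite $af-fa=\sum_w c_w(aw-wa)$, and compare coefficients word by word to conclude that $c_w\neq 0$ forces $w$ to be a power of $a$. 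Then $K[a]\cap K[b]=K$ because distinct free generators give subalgebras whose word-supports overlap only in the empty word, that is, only in the scalars. Given this identity, Lemma~\ref{le:ideal-interp}(1) makes each of $C_{\MA_K(X)}(a)$ and $C_{\MA_K(X)}(b)$ Diophantine, and Lemma~\ref{le:Diop_operations} makes their intersection Diophantine, so $K$ is defined in $\MA_K(X)$ by the finite system $\{xa-ax=0,\ xb-bx=0\}$.

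In the degenerate case $|X|=1$, so that $\MA_K(X)=K[x]$ is a commutative polynomial ring, the centralizer idea is vacuous. Here $K=\{0\}\cup K[x]^{\times}$, and I would use instead
\[
f\in K \;\Longleftrightarrow\; \exists g\ (f^{2}g-f=0),
\]
which, since $\MA_K(X)$ has no zero divisors, is equivalent to $f=0$ or $fg=1$, i.e.\ to $f$ being a nonzero constant; thus $K$ is Diophantine in this case as well.

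The main obstacle is the centralizer identity $C_{\MA_K(X)}(a)=K[a]$ for a free generator $a$; once this structural fact is in hand, the proof is a routine application of the closure properties of Diophantine sets already recorded in Lemmas~\ref{le:ideal-interp} and~\ref{le:Diop_operations}.
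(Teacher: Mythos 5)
Your proof is correct, but it takes a genuinely different route from the paper. The paper's proof is a one-line appeal to the structure of the unit group: since the units of $\MA_K(X)$ are exactly $K\smallsetminus\{0\}$ (degree additivity in a graded domain), one has $f\in K \Leftrightarrow f=0 \vee \exists g\,(fg=1)$, and the union of two Diophantine sets is Diophantine by Lemma~\ref{le:Diop_operations}. You instead realize $K$ (for $|X|\ge 2$) as $C_{\MA_K(X)}(a)\cap C_{\MA_K(X)}(b)$ for two distinct free generators, which is directly an algebraic set cut out by the finite system $xa-ax=0,\ xb-bx=0$, with no existential quantifier at all; the burden shifts to the structural fact $C_{\MA_K(X)}(a)=K[a]$, which your monomial-support argument (or Bergman's centralizer theorem) does establish, and $K[a]\cap K[b]=K$ is clear on supports. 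Your separate treatment of $|X|=1$ via $f^2g-f=0$ is exactly the paper's unit argument specialized to $K[x]$, so in effect you use the paper's idea only in the degenerate case. What your approach buys: a quantifier-free (equational, not merely Diophantine) definition of $K$ in rank $\ge 2$, and no reliance on knowing the full unit group. What it costs: a case split on $|X|$ and the centralizer computation, which is more work than the paper's uniform two-line argument. Both are sound; note also that your intersection step does not even need the $P_1^2+aP_2^2$ trick of Lemma~\ref{le:Diop_operations}, since a finite system of equations is already a Diophantine definition by the paper's own convention.
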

\begin{proof}
Indeed,  for $f \in \MA_K(X) $ one has 
$$
f \in K \Longleftrightarrow \exists g (fg = 1) \vee f = 0,
$$
so $K$ is a union of two Diophantine sets hence by Lemma \ref{le:Diop_operations} it is also Diophantine. 

\end{proof}

\begin{cor}
If the Diophantine problem in $K$ is undecidable then it is undecidable in $\MA_K(X)$ as well.
\end{cor}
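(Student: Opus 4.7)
The plan is to verify that $K$ is e-interpretable in $\MA_K(X)$, i.e.\ $K \to_e \MA_K(X)$, and then invoke Lemma \ref{le:reductions}(1). This is a matter of checking the four clauses of the Definition of e-interpretation with $\phi$ taken to be the identity inclusion $K \hookrightarrow \MA_K(X)$. Clause (1) (surjectivity onto $K$) is trivial, and clause (2) ($K$ is e-definable in $\MA_K(X)$) is exactly Lemma \ref{le:field-Diop}. For clauses (3) and (4), the preimage of equality is just equality in $\MA_K(X)$ restricted to $K \times K$, and the graphs of the operations $+$ and $\times$ of $K$ coincide with those of $\MA_K(X)$ restricted to $K$; in particular, each is already cut out by a single equation in $\MA_K(X)$ together with the Diophantine conditions ``arguments lie in $K$'', which are Diophantine by Lemma \ref{le:field-Diop} and Lemma \ref{le:Diop_operations}.

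Once $K \to_e \MA_K(X)$ is established, Lemma \ref{le:reductions}(1) immediately yields the corollary: any undecidability of the Diophantine problem in $K$ transfers to $\MA_K(X)$. Unpacking the reduction via Lemma \ref{le:reduction}, an equation $P(x_1,\dots,x_n)=0$ over $K$ is translated into the conjunction of $P(x_1,\dots,x_n)=0$ in $\MA_K(X)$ with the $n$ Diophantine conditions $x_i\in K$; this conjunction is equivalent to a single equation by Lemma \ref{le:Diop_operations}, and has a solution in $\MA_K(X)$ if and only if the original one has a solution in $K$ (the ``if'' direction is trivial since $K\subseteq \MA_K(X)$, and the ``only if'' direction is enforced by the constraints $x_i\in K$).

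There is no genuine obstacle here: every ingredient has already been set up. The only small point worth emphasizing is that the e-interpretation uses the inclusion of $K$ as a subring of $\MA_K(X)$, so the operations are inherited automatically and one does not need to re-encode arithmetic — all the work was done in proving Lemma \ref{le:field-Diop} and Lemma \ref{le:Diop_operations}.
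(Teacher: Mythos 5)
Your proposal is correct and follows exactly the paper's route: the paper derives the corollary from Lemma \ref{le:field-Diop} (showing $K$ is Diophantine, hence e-interpretable via the inclusion) together with Lemma \ref{le:reductions}. Your write-up merely spells out the verification of the e-interpretation clauses that the paper leaves implicit.
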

It follows from Lemmas \ref{le:field-Diop} and  \ref{le:reductions}.  

In this section we prove the following main result.

\begin{theorem} \label{th:undec-eq1}
Let  $\MA_K(X)$ be a free associative algebra over  a field $K$.  Then  the  Diophantine problem in  $\MA_K(X)$   is undecidable.
\end{theorem}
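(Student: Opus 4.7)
The plan is to e-interpret the polynomial ring $K[t]$ inside $\MA_K(X)$ and then invoke the already-established undecidability of the Diophantine problem in polynomial rings over an integral domain (Theorem \ref{co:undec-poly}), after which the reduction Lemma \ref{le:reductions} closes the argument.

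Fix any letter $t \in X$ (available since $X$ is non-empty) and let $C = C_{\MA_K(X)}(t)$ denote its centralizer in the free algebra. The key technical point is the identification $C = K[t]$. The inclusion $K[t] \subseteq C$ is immediate. For the reverse, expand an arbitrary $f \in C$ in the monomial basis as $f = \sum_w c_w w$, and compare the monomial expansions of $tf$ and $ft$: any monomial equality of the form $tw_1 = w_2 t$ in the free monoid on $X$ forces, letter-by-letter, all letters of $w_1$ and $w_2$ to equal $t$. Consequently only monomials of the form $w = t^n$ can contribute, and $f \in K[t]$. This combinatorial identification (a special case of Bergman's centralizer theorem, but provable directly) is the main, though classical, technical obstacle.

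Once $C = K[t]$ is in hand, Lemma \ref{le:ideal-interp}(1) shows that $C$ is Diophantine in $\MA_K(X)$, cut out by the single equation $xt - tx = 0$. The inclusion $K[t] = C \hookrightarrow \MA_K(X)$ is then an e-interpretation: the domain is Diophantine, equality is equality, and the graphs of $+$ and $\times$ on $K[t]$ coincide with their restrictions from the ambient algebra and are therefore trivially e-definable. All coefficients appearing in the defining systems lie in $\MA_{K_0}(X)$, where $K_0$ is the prime subfield of $K$, so the interpretation is constructive.

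Finally, by Theorem \ref{co:undec-poly} applied with $R = K$, the Diophantine problem in $K[t]$ is undecidable. Composing with the e-interpretation via Lemma \ref{le:reductions}(1) yields undecidability of the Diophantine problem in $\MA_K(X)$, which is the desired conclusion. Note that the entire argument depends on one nontrivial input —the centralizer identification $C_{\MA_K(X)}(t) = K[t]$— with everything downstream being a direct application of the interpretation machinery of Section~2 and the Pell-equation results of Section~3.
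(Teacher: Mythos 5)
Your overall route is essentially the paper's: the paper also obtains $K[t]$ inside $\MA_K(X)$ as a centralizer (of a non-invertible polynomial $P$, via Bergman's centralizer theorem), feeds it into Denef's undecidability results for polynomial rings, and closes with Lemma \ref{le:reductions}. Your only organizational difference is that you invoke Theorem \ref{co:undec-poly} directly instead of first passing through the explicit e-interpretation of $\Z$ (resp.\ $\langle \Z; +, \mid_p\rangle$) as in Theorem \ref{th:undec-eq}; since that theorem is itself proved from the same Denef results, nothing of substance changes, and your choice of $P=t$ a free generator is a reasonable way to make the needed centralizer computation elementary rather than an appeal to Bergman.

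However, the elementary justification you give for that one nontrivial input, $C_{\MA_K(X)}(t)=K[t]$, contains a false step. The claim that a monomial identity $tw_1=w_2t$ in the free monoid forces every letter of $w_1$ and $w_2$ to equal $t$ is wrong: for a letter $x\neq t$ take $w_1=xt$ and $w_2=tx$, so that $tw_1=txt=w_2t$ while $w_1\notin\{t^n\}$. The equality $tf=ft$ does not pair each support monomial $w$ with itself; it only yields a coefficient-preserving permutation $\sigma$ of the support of $f$ with $tw_i=w_{\sigma(i)}t$. To conclude you must chase the cycles of $\sigma$, exactly as the paper does in Lemma \ref{pr:centralizer-hyp} for group algebras: a cycle $(1\,2\cdots k)$ gives $t^kw_1=w_1t^k$, and a word commuting with $t^k$ in a free monoid must be a power of $t$, whence $w_1=\cdots=w_k$ is a power of $t$ and the cycle is trivial. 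With that repair (or by simply citing Bergman's centralizer theorem, as you note is possible), the remainder of your argument --- Diophantineness of the centralizer via Lemma \ref{le:ideal-interp}(1), the induced e-interpretation of $K[t]$ with coefficients in the prime subfield, Theorem \ref{co:undec-poly} with $R=K$ covering both characteristics at once, and Lemma \ref{le:reductions}(1) --- is correct and complete.
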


In fact, we base our proof of Theorem \ref{th:undec-eq1} on the following results of independent interest.

\begin{theorem} \label{th:undec-eq}
Let  $\MA_K(X)$ be a free associative algebra over a field $K$. Then 
\begin{itemize}
\item if characteristic of $K$ is zero then $\Z$ has Diophantine interpretation in $\MA_K(X)$.
\item if characteristic $p$ of $K$ is positive, then the structure $\langle \Z; +, \mid_p\rangle $ has Diophantine interpretation in $\MA_K(X)$.
\end{itemize}
\end{theorem}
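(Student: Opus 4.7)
The plan is to reduce to the polynomial-ring cases already established in Theorems \ref{th:e-interp-char-0} and \ref{th:e-inter-char-p} by locating a Diophantine copy of a one-variable polynomial ring inside $\MA_K(X)$. The natural candidate is the centralizer of a single generator: by the classical theorem of P.~M.~Cohn on centralizers in free associative algebras, the centralizer of any non-scalar element of $\MA_K(X)$ is a commutative polynomial ring over $K$ in one variable. In particular, for any fixed $x \in X$ one has $C_{\MA_K(X)}(x) = K[x]$.

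First I would invoke Lemma \ref{le:ideal-interp}(1) applied to the singleton $\{x\}$ to see that $K[x]$ is a Diophantine subset of $\MA_K(X)$, cut out by the single equation $yx = xy$. Since the operations $+$ and $\times$ on $K[x]$ are just the restrictions of the ambient operations of $\MA_K(X)$ and equality is equality, the inclusion $K[x] \hookrightarrow \MA_K(X)$ already gives an e-interpretation of the ring $K[x]$ in $\MA_K(X)$: the graph of $+$ is e-defined by the system $\{yx=xy,\; zx=xz,\; wx=xw,\; y+z=w\}$, and similarly for $\times$. I would then apply Theorem \ref{th:e-interp-char-0} when $\mathrm{char}\, K = 0$, or Theorem \ref{th:e-inter-char-p} when $\mathrm{char}\, K = p > 0$, to the polynomial ring $K[x]$ over the integral domain $R = K$, producing an e-interpretation of $\Z$, respectively $\langle \Z;+,\mid_p\rangle$, in $K[x]$. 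Composing via the transitivity Lemma \ref{le:transitive} then yields the theorem.

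The only non-routine ingredient is Cohn's centralizer theorem, and this is the step I expect to flag as the main content of the argument; once that fact is available, the remainder is a direct assembly of the e-interpretation machinery developed in the previous sections. In the degenerate case $|X|=1$ the proof trivializes, since then $\MA_K(X) = K[x]$ and the statement is literally Theorem \ref{th:e-interp-char-0} or Theorem \ref{th:e-inter-char-p}.
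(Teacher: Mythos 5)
Your proposal is correct and follows essentially the same route as the paper: the paper also realizes $K[t]$ inside $\MA_K(X)$ as the centralizer of a non-invertible element (a Diophantine subset by a single commutation equation), then applies Theorem \ref{th:e-interp-char-0} or Theorem \ref{th:e-inter-char-p} together with transitivity of e-interpretability. Your choice of the centralizer of a generator $x$, where $C(x)=K[x]$ can be seen directly, is a harmless specialization of the paper's appeal to the Cohn--Bergman centralizer theorem.
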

\begin{proof}
Suppose $char(K) = 0$. Then the ring of polynomials $K[t]$ in one variable $t$ is e-interpretable in  $\MA_K(X)$ as the centralizer $C(P)$ of a non-invertible polynomial $P \in \MA_K(X)$. Then by Theorem \ref{th:e-interp-char-0}, $\Z$ is e-iterpretable in $K[t]$. Hence by transitivity of e-interpretability (Lemma \ref{le:transitive}) $\Z$ is e-iterpretable in  $\MA_K(X)$. 

If $char(K) =p > 1$ then the result follows in a similar way from the Denef's theorem from \cite{denef2}  that states that $\langle \Z; +, \mid_p\rangle$ is e-interpretable in $K[t]$. Here $x\mid_ny$, by definition, means that $y = xqn^f$ for some $q, f \in\mathbb Z.$
\end{proof}

  Since  Diophantine problems in $\Z$ and $\langle \Z; +, \mid_n\rangle $ are undecidable (see \cite{denef2} for the second problem), it follows from Theorem \ref{th:undec-eq} and  Lemma \ref{le:reductions} that Diophantine problem in $\MA_K(X)$ is also undecidable, hence Theorem \ref{th:undec-eq1}.

\section{Equations in group algebras of hyperbolic groups}

\subsection{General facts}

Let $G$ be a torsion-free group and $K$ a field. In this section we study Diophantine problems in group algebras $K(G)$ under some restriction on $G$.

We start with some remarks  on Diophantine sets in $K(G)$. If $K(G)$ has no zero divisors then as usual unions of Diophantine sets are Diophantine. However, whether the same holds for intersections is not clear.

For a ring $R$ denote by $R^\ast$ the set of units in $R$. Recall that a  group $G$ satisfies Kaplansky's unit conjecture  if for any field $K$  units in the group algebra $K(G)$ are only the obvious ones $\alpha\cdot g$, where $\alpha \in K \smallsetminus \{0\}$ and $g \in G$.  In our case, when $G$ is torsion-free, Kaplansky's unit conjecture implies that there are no zero divisors in $K(G)$.

\begin{lemma}
Let $G$ be a a torsion-free group satisfying Kaplansky's unit conjecture. Then for any filed $K$ the following hold:
\begin{itemize}
\item [1)] the field $K$ is a Diophantine subset of $K(G)$;
\item [2)] the group $G$ is Diophantine interpretable in $K(G)$.
\end{itemize}
\end{lemma}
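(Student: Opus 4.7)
The plan is to exploit Kaplansky's unit conjecture, which identifies the units of $K(G)$ with $K^\ast \cdot G$, together with its consequence for torsion-free $G$ that $K(G)$ has no zero divisors. Part 1) comes from isolating $K$ inside the set of units by a support argument, and part 2) follows by interpreting $G$ as the unit group modulo scalars.

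For part 1) the main claim is
\[
f\in K \;\Longleftrightarrow\; \exists\, y,\,z \in K(G):\; f^2 y = f \ \wedge\ (1+f)^2 z = 1+f.
\]
Because $K(G)$ is a domain, the equation $f^2 y = f$ is equivalent to $f(fy-1)=0$, i.e.\ to $f$ being either $0$ or a unit, and similarly for $1+f$. If $f = \alpha\cdot 1_G \in K$, then both $f$ and $1+f$ are of the form $\beta\cdot 1_G$, hence zero or a unit. Conversely, if $f\ne 0$ satisfies the right-hand side, then $f$ is a unit, so $f = \alpha g$ with $\alpha\in K^\ast$ and $g\in G$, and the support of $1+f$ equals $\{1_G, g\}$. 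A unit has singleton support by Kaplansky and zero has empty support, so forcing $1+f$ to be zero or a unit pins down $g = 1_G$, placing $f$ in $K$. The right-hand side of the equivalence is a finite system of equations with existentially quantified auxiliaries $y,z$, so $K$ is a Diophantine subset of $K(G)$.

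For part 2), take $X = \{f \in K(G) : \exists y,\ fy = 1\}$, the set of units, which is Diophantine by definition, and define $\phi:X\to G$ by $\alpha g\mapsto g$; this is well-defined because $G$ is a $K$-basis of $K(G)$, and surjective because $1\cdot g \in X$ for every $g$. Since $K^\ast = K \cap X$ is Diophantine by part 1), the preimages in products of $X$ of equality, the group product, inversion, and the identity are all Diophantine, via the translations
\[
\phi(f_1) = \phi(f_2) \iff \exists \lambda \in K^\ast:\ f_1 = \lambda f_2,
\]
\[
\phi(f_1)\phi(f_2) = \phi(f_3) \iff \exists \lambda \in K^\ast:\ f_1 f_2 = \lambda f_3,
\]
\[
\phi(f) = 1_G \iff f \in K^\ast,
\]
and analogously for inversion. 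Thus $\phi$ is a Diophantine interpretation of $G$ in $K(G)$.

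The only delicate point, and the conceptual heart of the argument, is the observation that $1 + \alpha g$ with $\alpha\ne 0$ and $g\ne 1_G$ can be neither zero nor a unit; this is where Kaplansky's unit conjecture enters essentially. The rest of the argument is routine bookkeeping.
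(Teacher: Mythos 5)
Your proof is correct and follows essentially the same route as the paper: both isolate $K$ inside the units of $K(G)$ by observing that $1+\alpha g$ with $\alpha\neq 0$, $g\neq 1$ can be neither zero nor a unit (Kaplansky units have singleton support), and both then realize $G$ as the unit group modulo $K^\ast$. The only cosmetic difference is that you encode ``zero or unit'' by the single equation $f^2y=f$ and so work with a pure conjunction, whereas the paper writes $K$ as the union $\{0\}\cup\{-1\}\cup\{x \mid \exists y\, (x(x+1)y=1)\}$ and uses the absence of zero divisors to handle the disjunction.
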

\begin{proof}
Observe first that the group of units $K(G)^*$ in $K(G)$ is Diophantine in $K(G)$ since it can be defined by the  formula $ \exists y (xy = 1)$ in a variable $x$.

To show 1) observe first that $K$ is  the following union  of Diophantine sets 
$$
K  = \{ 0\}\vee \{-1\}  \vee \{x \mid \exists y (x (x+1)y = 1)\}.
$$
 Indeed, all elements $K \smallsetminus \{0, -1\}$ satisfy the condition $\exists y (x (x+1)y = 1)$ so the inclusion $\subseteq $ in the equality above holds. Conversely, if $x \in K(G)$ is such that $\exists y (x (x+1)y = 1)$ in $K(G)$ then $x$ is invertible, as well as $x+1$ since $(x+1)x y = x (x+1)y = 1$. Since $K(G)^\ast = K\cdot G$ it follows that such $x$ cannot be of the form $\alpha g$ for $\alpha \in K^\ast$ and $g \in G, g \neq 1$. Hence $x \in K$, as claimed. Because $K(G)$ does not have zero divisors it follows that uniuons of Diophantine sets are Diophantine, so $K$ is Diophantine in $K(G)$.
   Now 2) follows from 1) since $K(G)^\ast = K \cdot G = K \times G$ so $G \simeq K(G)^*/K^*$ which gives a Diophantine interpretation of $G$ in $K(G)$ (because $K(G)^\ast$ and $K$ are Diophantine in $K(G)$). 

\end{proof}

\begin{theorem}
Let $G$ be a torsion-free group satisfying Kaplansky's unit conjecture. Then Diophantine problems in $K$ and $G$ effectively reduce to Diophantine problem in $K(G)$. Hence, if Diophantine problem either  in $K$  or in $G$ is undecidable then it is undecidable in $K(G)$, and if Diophantine problem is decidable in $K(G)$ then it is decidable in $K$ and in $G$.
\end{theorem}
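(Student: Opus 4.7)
The plan is to derive this theorem as an essentially formal consequence of the preceding lemma together with the general reduction principles already established for e-interpretations.

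First, I would invoke the immediately preceding lemma, which asserts that under the hypotheses on $G$, both the field $K$ and the group $G$ are e-interpretable in the group algebra $K(G)$: the field $K$ appears as a Diophantine subset of $K(G)$, and the group $G$ arises as the Diophantine quotient $K(G)^\ast / K^\ast$ of the Diophantine unit group. In the notation of the paper, this gives us $K \to_e K(G)$ and $G \to_e K(G)$.

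Next, I would apply Lemma \ref{le:reduction}, which provides the effective procedure converting a finite system of equations over an e-interpreted structure into an equivalent finite system over the ambient structure. Applied to $K \to_e K(G)$, this produces an effective reduction of the Diophantine problem in $K$ to the Diophantine problem in $K(G)$; applied to $G \to_e K(G)$, it produces the analogous reduction for $G$. This yields the first sentence of the theorem.

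The two consequences then follow by contrapositive from Lemma \ref{le:reductions}: if Diophantine problem in $K$ (respectively $G$) is undecidable, then undecidability propagates upward through the e-interpretation to $K(G)$; conversely, if it is decidable in $K(G)$, decidability propagates downward to both $K$ and $G$. I do not foresee any genuine obstacle here; the work has already been done in establishing the e-interpretations in the previous lemma, and the present theorem is simply the packaging of those interpretations through the abstract transfer principles of Section~2. The only point requiring a word of care is to note that the reductions provided by Lemma \ref{le:reduction} are uniform and effective, so that the phrase \emph{effectively reduce} in the statement is literally justified.
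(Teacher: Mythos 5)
Your proposal is correct and matches the paper's (implicit) argument exactly: the paper states this theorem without a separate proof precisely because it is the immediate combination of the preceding lemma (giving $K \to_e K(G)$ and $G \to_e K(G)$ via the Diophantine unit group) with the transfer principles of Lemmas \ref{le:reduction} and \ref{le:reductions}. Nothing further is needed.
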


Hence when studying decidability of Diophantine problem in the group algebra $K(G)$ which satisfies Kaplansky's unit conjecture  one can assume that Diophantine problems in $K$ and $G$ are decidable.

However, as we show below in many cases we do not need the condition on the units of $K(G)$ or on Diophantine problems in $K$ or $G$.  This is one of several surprising developments in this study.
To do this we need the following result.
\begin{lemma} \label{pr:centralizer-hyp} Let $K$ be a field,  $G$ a   torsion-free group, and $g \in G$ such that for any non-zero integer $k$  one has $C_G(g^k) = C_G(g)$.  Then  the centralizer $C_{K(G)}(g)$  of $g$ in $K(G)$ is isomorphic to the group algebra $K(C_G(g))$ of the centralizer $C_G(g)$ of  $g$ in $G$. In particular, if $C_G(g)$ is free abelian with basis $T$, then the centralizer $C_{K(G)}(g)$ is isomorphic to the ring of the Laurent  polynomials $K[T]$ and it is  e-interpretable in $K(G)$.
\end{lemma}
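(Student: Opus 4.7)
The plan is to identify the centralizer $C_{K(G)}(g)$ with the group subalgebra $K(C_G(g))$ of $K(G)$ by analyzing the action of $\langle g \rangle$ on $G$ by conjugation, and then to deduce the Laurent polynomial description and the e-interpretability claim formally.

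First I would expand a general element $\alpha = \sum_{h \in G} \alpha_h h$ of $K(G)$ (with finite support) and rewrite the condition $g\alpha = \alpha g$ as $g\alpha g^{-1} = \alpha$. Comparing coefficients yields $\alpha_h = \alpha_{g^{-1}hg}$ for every $h \in G$, so the function $h \mapsto \alpha_h$ is constant on each orbit of the conjugation action of $\langle g\rangle$ on $G$. Next I would invoke the centralizer hypothesis: an orbit $\{g^n h g^{-n} : n \in \Z\}$ is finite iff some nontrivial power $g^n$ commutes with $h$; by the assumption $C_G(g^n) = C_G(g)$ for every $n \neq 0$, this forces $h \in C_G(g)$, in which case the orbit is actually the singleton $\{h\}$. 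All other orbits are infinite, and since $\alpha$ has finite support, the coefficients on them must vanish. Thus $\mathrm{supp}(\alpha) \subseteq C_G(g)$, giving $C_{K(G)}(g) \subseteq K(C_G(g))$; the reverse inclusion is immediate since every element of $C_G(g)$ already commutes with $g$ in $G$. This yields the identification $C_{K(G)}(g) = K(C_G(g))$ as subrings of $K(G)$.

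When $C_G(g)$ is free abelian with basis $T$, its group algebra $K(C_G(g))$ is, by the universal property of group algebras, canonically the Laurent polynomial ring $K[T, T^{-1}]$. E-interpretability in $K(G)$ is then formal: the subset $C_{K(G)}(g)$ is cut out in $K(G)$ by the single equation $xg - gx = 0$, hence is Diophantine by Lemma \ref{le:ideal-interp}(1); the identity map from this Diophantine set onto the ring $K[T, T^{-1}]$ verifies the four conditions of an e-interpretation since its equality, addition and multiplication are inherited from $K(G)$ and so are defined by the single equations $x = y$, $x + y = z$, and $xy = z$. The one genuinely delicate step is the orbit analysis, and it is precisely the hypothesis $C_G(g^k) = C_G(g)$ that makes it go through; without it, one would have to deal with non-singleton finite orbits whose aggregate coefficients could commute with $g$ while individually lying outside $K(C_G(g))$.
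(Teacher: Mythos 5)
Your proof is correct and follows essentially the same route as the paper: the paper phrases the key step as a permutation of the (finite) support decomposed into cycles, with a nontrivial cycle of length $k$ forcing $[g^k,g_1]=1$ and hence $g_1\in C_G(g)$, which is exactly your orbit analysis under conjugation by $\langle g\rangle$ combined with finiteness of support. The Laurent-polynomial identification and the e-interpretability via the single equation $xg-gx=0$ also match the paper's treatment.
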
 

\begin{proof} Suppose $1 \neq g\in G$, and for some $u \in K(G)$ one has $ug = gu$. If $u= \Sigma_{i = 1}^m \alpha _ig_i$, where $\alpha_i \in K, g_i \in G$ and all elements $g_i$ are distinct then 
$$
\Sigma_{i = 1}^m \alpha _ig_ig = \Sigma_{i = 1}^m \alpha _i gg_i.
$$
Then there is a permutation $\sigma  \in Sym(m)$  such that $\alpha _i=\alpha _{\sigma (i)}$ and $g_ig=gg_{\sigma (i)}$, so $g^{-1}g_ig=g_{\sigma (i)}.$ If $\sigma$ fixes some $i$, then $g_i$ commutes with $g$.  Now  $\sigma$ admits a decomposition as a product of disjoint  non-trivial cycles $\sigma = \sigma_1 \cdots \sigma_n$. Consider a cycle  $\sigma_i$,
 we may assume for simplicity (upon renaming  indices) that   $\sigma_i = (12\ldots k)$. Then 
$$ g^{-1}g_1g=g_2, \ldots,  g^{-1}g_{k-1}g=g_k, g^{-1}g_kg=g_1.$$
Hence  $g^{-k}g_1g^k = g_1$, i.e., 
$[g^k,g_1]=1,$ and by our assumption $[g,g_1] =1$, so $g_1=g_2=\ldots = g_k \in C_G(g)$.
 It follows that   $u = \Sigma_{i = 1}^m \alpha _ig_i$ is a linear combination of group elements that commute with $g$, so it belongs to the group algebra $K(C_G(g))$. Conversely, any element from $K(C_G(g))$ obviously commutes with $g$, so in fact $C_{K(G)}(g) = K(C_G(g))$ as claimed. 
 
 To prove the "in particular" part, observe that  the group algebra of a free abelian group with basis $T$ is a ring of Laurent polynomials in variables $T$.  This proves the lemma.
\end{proof}

\begin{theorem} \label{main} Let $K$ be a field,  $G$ a   torsion-free group, and $g \in G$ such that 
\begin{itemize}
\item the centralizer $C_G(g)$ is a countable free abelian  group;
\item   $C_G(g^k) = C_G(g)$ for any non-zero integer $k$. 
\end{itemize}
 Then   Diophantine problem in $K(G)$ is undecidable.
\end{theorem}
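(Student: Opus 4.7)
The plan is to reduce, via e-interpretability, the known-undecidable Diophantine problem of a Laurent polynomial ring over $K$ in countably many variables to the Diophantine problem in $K(G)$. All the required machinery is already in place in the excerpt, so the argument assembles previously-proved lemmas rather than doing new combinatorics.

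The first step is to identify an explicit copy of a Laurent polynomial ring inside $K(G)$. Since $C_G(g^k)=C_G(g)$ for every nonzero integer $k$, Lemma \ref{pr:centralizer-hyp} applies and gives $C_{K(G)}(g) \simeq K(C_G(g))$. Because $C_G(g)$ is free abelian with countable basis $T$, the group algebra $K(C_G(g))$ is canonically the ring of Laurent polynomials $K[T,T^{-1}]$. Thus $K[T,T^{-1}]$ is realized concretely as the subring $C_{K(G)}(g) \subseteq K(G)$.

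The second step is to observe that this subring is e-interpretable in $K(G)$. By Lemma \ref{le:ideal-interp}(1), the centralizer of any finite (here, single) element of a ring is a Diophantine subset, defined by the single equation $xg=gx$. The ring operations of $C_{K(G)}(g)$ are the restrictions of the ambient operations of $K(G)$, so equality, addition and multiplication in $C_{K(G)}(g)$ are tautologically Diophantine. This delivers an e-interpretation of $K[T,T^{-1}] \simeq C_{K(G)}(g)$ in $K(G)$ in the sense of the definition of e-interpretability.

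The third step invokes Corollary \ref{co:infinite-Laurent}: for any integral domain $R$ and any nonempty finite or countable set of variables $T$, the Diophantine problem in $R[T,T^{-1}]$ with coefficients in $\mathbb{Z}[T]$ is undecidable. Taking $R=K$ (which is an integral domain), the Diophantine problem for our $K[T,T^{-1}]$ is undecidable. Combining this with the e-interpretation produced above and applying Lemma \ref{le:reductions}(1) transfers undecidability from $K[T,T^{-1}]$ to $K(G)$, which is the desired conclusion.

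The only point requiring a little care—and what I would flag as the main (albeit mild) obstacle—is the coefficient bookkeeping: Corollary \ref{co:infinite-Laurent} is stated with coefficients in $\mathbb{Z}[T]$, and one must check that under the e-interpretation each such coefficient becomes an explicitly writable element of $K(G)$ (a $\mathbb{Z}$-linear combination of products of $g$-centralizing basis elements, i.e. of the chosen generators of $C_G(g)$). This is automatic once a concrete basis $T \subseteq C_G(g)$ is fixed, because Lemma \ref{le:reduction} is constructive: it effectively rewrites each equation over $K[T,T^{-1}]$ as an equivalent equation over $K(G)$. Everything else is a direct composition of results already established.
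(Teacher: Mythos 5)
Your proposal is correct and follows essentially the same route as the paper: identify $C_{K(G)}(g)\simeq K(C_G(g))\simeq K[T,T^{-1}]$ via Lemma \ref{pr:centralizer-hyp}, note that the centralizer is e-interpretable in $K(G)$, and then transfer undecidability from Corollary \ref{co:infinite-Laurent} using Lemma \ref{le:reductions}. The extra remark on coefficient bookkeeping is a sensible (and correct) elaboration of a point the paper leaves implicit.
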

\begin{proof} By the Lemma \ref{pr:centralizer-hyp},  $C_{K(G)}(g)$ is isomorphic to the ring of the Laurent  polynomials $K[T,T^{-1}]$, where $T$ is a non-empty finite or countable  set of variables. Furthermore, $C_{K(G)}(g)$ is $e$-interpretable in $K(G)$ as the centralizer of $g$.   By Corollary \ref{co:infinite-Laurent} Diophantine problem in $K[T,T^{-1}]$ is undecidable. Hence by Lemma \ref{le:reductions} Diophantine problem in $K(G)$ is also undecidable.
\end{proof}

Notice, that torsion-free
nilpotent and solvable groups satisfy the condition of the theorem. We will be more interested in groups with decidable Diophantine problem that satisfy these conditions: torsion-free hyperbolic, toral relatively hyperbolic, RAAGs.

\subsection{Group algebras with undecidable Diophantine problems}

We say that the centralizer $C_G(g)$ of an element $g$ in a group $G$ is of {\em  Laurent type} if it is  a countable free abelian  group and  $C_G(g^k) = C_G(g)$ for any non-zero integer $k$. 

A relatively hyperbolic group is called {\em toral} if it is torsion-free and  parabolic subgroups are abelian.

\begin{lemma}
Let $G$ be a torsion-free hyperbolic or a toral relatively hyperbolic group. Then every proper centralizer in $G$ is of Laurent type.
\end{lemma}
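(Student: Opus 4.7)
The plan is to fix a non-trivial element $g \in G$ with $C_G(g)$ a proper subgroup, and to verify the two defining properties of Laurent type centralizers directly, case-by-case, using standard structure theorems for centralizers in (relatively) hyperbolic groups. Countability is immediate, because every such $G$ is finitely generated, and therefore so is every subgroup (in fact every centralizer we encounter will be finitely generated). The core question is therefore to identify $C_G(g)$ explicitly and to check that it stabilizes under taking powers of $g$.

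For the torsion-free hyperbolic case, I would invoke the classical fact that every non-trivial element $g$ of a torsion-free hyperbolic group lies in a unique maximal cyclic subgroup $E(g) = \langle h \rangle$, and that $C_G(g) = E(g)$. This immediately gives $C_G(g) \cong \mathbb{Z}$, which is free abelian. The equality $C_G(g^k) = C_G(g)$ follows because $g^k \in E(g)$, and since $E(g)$ is a maximal cyclic subgroup containing $g^k$, uniqueness forces $E(g^k) = E(g)$, hence $C_G(g^k) = E(g^k) = E(g) = C_G(g)$.

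For a toral relatively hyperbolic group $G$ (with peripheral structure $\mathcal{P}$ consisting of abelian, hence finitely generated free abelian subgroups), I would split on whether $g$ is parabolic or loxodromic. If $g$ is loxodromic, i.e.\ not conjugate into any member of $\mathcal{P}$, then the same argument as in the hyperbolic case applies: $g$ is contained in a unique maximal infinite cyclic subgroup which coincides with $C_G(g)$, and this subgroup contains $g^k$ for every $k \neq 0$, so $C_G(g^k) = C_G(g)$. If $g$ is parabolic, then there is a unique conjugate $P$ of a peripheral subgroup with $g \in P$; since $P$ is abelian one has $P \subseteq C_G(g)$, and conversely almost malnormality of the peripheral structure, combined with torsion-freeness (which upgrades almost malnormality to plain malnormality on non-trivial elements), gives that any $h$ with $hgh^{-1} = g \in P \cap hPh^{-1}$ must lie in $P$. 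Thus $C_G(g) = P$, which is free abelian. For $k \neq 0$ we have $g^k \neq 1$ (torsion-freeness), $g^k \in P$, and the identical argument yields $C_G(g^k) = P = C_G(g)$.

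The main subtlety I anticipate is the parabolic subcase: one needs to be careful that "almost malnormality" of peripheral subgroups becomes genuine malnormality in the toral setting, which is where torsion-freeness of $G$ is essential, and one needs the uniqueness of the parabolic subgroup containing $g$ so that the same $P$ accommodates all non-zero powers of $g$. Everything else reduces to well-documented facts about cyclic centralizers and maximal cyclic subgroups in (relatively) hyperbolic groups, so no heavy calculation is required.
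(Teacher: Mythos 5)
Your argument is correct and follows essentially the same route as the paper, whose proof simply cites that centralizers of elements in these groups are finitely generated free abelian and that the groups are commutation transitive --- your case analysis via unique maximal cyclic subgroups and malnormality of the (abelian) peripheral subgroups is exactly the standard justification of those two facts, with $C_G(g^k)=C_G(g)$ extracted in the same way. One minor slip: a subgroup of a finitely generated group need not be finitely generated, but this parenthetical claim is not load-bearing, since countability of $C_G(g)$ already follows from countability of $G$ and you identify each centralizer explicitly as $\mathbb{Z}$ or a finitely generated free abelian peripheral subgroup anyway.
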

\begin{proof} If $G$ is torsion-free hyperbolic or toral relatively hyperbolic, then the centralizer of every element in $G$ is a finitely generated free abelian group, in addition, $G$ is commutation transitive, and the statement follows. \end{proof}

\begin{theorem} \label{th:undec-eq2}
Let $G$ be a torsion-free  hyperbolic or a toral relatively hyperbolic group. Then for any field $K$  the Diophantine problem for $K(G)$  is undecidable.
\end{theorem}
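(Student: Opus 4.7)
The plan is to apply Theorem~\ref{main} after a brief verification of its hypotheses. Theorem~\ref{main} requires a torsion-free group containing an element $g$ whose centralizer $C_G(g)$ is a countable free abelian group and satisfies $C_G(g^k)=C_G(g)$ for every non-zero integer $k$. The preceding lemma, combined with the standard fact that centralizers of non-identity elements in torsion-free hyperbolic and toral relatively hyperbolic groups are finitely generated free abelian and that these groups are commutative transitive, shows that every non-identity element of such $G$ already has a centralizer of Laurent type.

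The main case is when $G$ is non-abelian. Commutative transitivity then forces $Z(G)$ to be trivial: a non-trivial central element would commute with every pair of non-identity elements, so transitivity of commutation would collapse $G$ to an abelian group. Hence any non-identity $g \in G$ has $C_G(g) \neq G$, and by the preceding lemma $C_G(g)$ is of Laurent type. Theorem~\ref{main} then delivers undecidability of the Diophantine problem in $K(G)$.

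The remaining case is when $G$ itself is abelian. In both classes such a $G$ is finitely generated free abelian (infinite cyclic in the hyperbolic case, an abelian parabolic subgroup in the toral relatively hyperbolic case), so $K(G)$ is a ring of Laurent polynomials in finitely many variables over $K$, and undecidability follows directly from Corollary~\ref{co:infinite-Laurent}. No substantive obstacle arises: all real work has been packaged into Theorem~\ref{main} and the centralizer structure lemma, and the theorem is essentially an assembly of those ingredients together with the edge-case appeal to Corollary~\ref{co:infinite-Laurent}.
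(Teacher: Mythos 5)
Your proposal is correct and follows essentially the same route as the paper: the preceding lemma (centralizers are finitely generated free abelian plus commutative transitivity, hence of Laurent type) feeds directly into Theorem~\ref{main}, which is exactly how the paper intends the theorem to be derived. Your explicit treatment of the abelian case is a harmless elaboration the paper leaves implicit; note that Theorem~\ref{main} does not require the centralizer to be a \emph{proper} subgroup, so for non-trivial abelian $G$ one may also just apply Theorem~\ref{main} with any $g\neq 1$ rather than appealing separately to Corollary~\ref{co:infinite-Laurent}.
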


Recall that a subgroup $H$ of a group $G$ is called {\em isolated} if the following implication holds for any $g \in G$ and $m \in \N, m\neq 0$: $g^m \in H \rightarrow g \in H$.

\begin{lemma}
Let $G$ be a right angled Artin group. Then there are elements $g \in G$ with centralizers of  Laurent type.
\end{lemma}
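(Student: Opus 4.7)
The plan is to exhibit $g$ as the product of the generators of a maximal clique in the defining graph $\Gamma$ of $G = A_\Gamma$, and then invoke Servatius's centralizer theorem for right-angled Artin groups.

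First, I would assume $\Gamma$ is nontrivial and choose a maximal clique $K = \{v_1, \ldots, v_n\}$ of $\Gamma$, setting $g := v_1 v_2 \cdots v_n$. Because the $v_i$ pairwise commute in $G$, the subgroup $\langle v_1, \ldots, v_n \rangle$ is free abelian of rank $n$ and contains $g$; hence $\Z^n \subseteq C_G(g)$, which already gives the ``$\supseteq$'' half of the computation.

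To pin down $C_G(g)$ exactly, I would invoke Servatius's theorem on centralizers in RAAGs (H.~Servatius, \emph{Automorphisms of graph groups}, J.\ Algebra \textbf{126} (1989)): for a cyclically reduced element $g$ with pure-factor decomposition $g = g_1 g_2 \cdots g_r$ (one factor per connected component of the commutation-complement restricted to $\operatorname{supp}(g)$), one has
$$
C_G(g) = \langle g_1 \rangle_{\max} \times \cdots \times \langle g_r \rangle_{\max} \times A_\Delta,
$$
where $\Delta$ is the induced subgraph of $\Gamma \setminus \operatorname{supp}(g)$ on vertices adjacent to every vertex of $\operatorname{supp}(g)$. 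For our $g$ the pure factors are the single letters $v_i$ (since $K$ is a clique, the commutation-complement on $K$ is edgeless), and each $v_i$ is primitive in $A_\Gamma$; by maximality of $K$, the graph $\Delta$ is empty. Hence $C_G(g) = \langle v_1 \rangle \times \cdots \times \langle v_n \rangle \cong \Z^n$, a countable free abelian group.

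For the second Laurent-type condition, for any $k \neq 0$, $g^k = v_1^k \cdots v_n^k$ has the same support $K$ and the same pure-factor decomposition $v_1^k \cdot v_2^k \cdots v_n^k$, with each $v_i^k$ still having root $v_i$ (generators are not proper powers in a RAAG, and roots are unique there). Applying Servatius's theorem again gives $C_G(g^k) = \langle v_1 \rangle \times \cdots \times \langle v_n \rangle = C_G(g)$. The only nonroutine step is citing Servatius's theorem in exactly this form; every other verification is immediate.
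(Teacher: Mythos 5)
Your proof is correct and follows essentially the same route as the paper: both arguments choose an explicit cyclically reduced element and invoke Servatius's Centralizer Theorem to identify its centralizer as a finitely generated free abelian group. The only differences are minor: the paper takes $g$ to be a word supported on all of $\Gamma$ and gets $C_G(g^k)=C_G(g)$ from the general RAAG fact that $[g^k,h]=1$ implies $[g,h]=1$, whereas you take the product of a maximal clique and re-apply the Centralizer Theorem to $g^k$; both choices work.
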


\begin{proof} If $G$ is a RAAG, then for any $g,h\in G$, $[g^k,h]=1$ implies $[g,h]=1$ because $[g^k,h]=1$ implies that all the letters in a reduced form of $h$ commute with all the letters in a reduced form of $g^n$.  Take $g$ to be a word containing all the canonical generators of $G$. Then by  The Centralizer Theorem in \cite{serv}, the centralizer of $g$ in $G$ is a finitely generated free abelian group, and the result follows. 
\end{proof}

\begin{cor}\label{cor:4}
Let $G$ be a right angled Artin group. Then for any field K the Diophantine problem for K(G) is undecidable.
\end{cor}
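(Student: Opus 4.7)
The plan is to derive this corollary as a direct consequence of Theorem \ref{main} combined with the preceding lemma on right angled Artin groups. The structural ingredients have essentially been assembled already, so the role of the proof is just to verify that the hypotheses of Theorem \ref{main} are satisfied for any RAAG $G$.

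First, I would recall that a right angled Artin group $G$ is torsion-free (this is classical). Next, I would invoke the preceding lemma, which produces an explicit element $g \in G$, namely a word using all canonical generators, whose centralizer $C_G(g)$ is a finitely generated free abelian group. The finitely generated free abelian group is in particular a countable free abelian group, matching the first hypothesis of Theorem \ref{main}. For the second hypothesis, I would note that the lemma's proof already observes the crucial commutation property for RAAGs: $[g^k,h] = 1$ implies $[g,h] = 1$, which is equivalent to saying $C_G(g^k) = C_G(g)$ for every non-zero integer $k$. Thus the centralizer of $g$ is of Laurent type.

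With both hypotheses verified, Theorem \ref{main} applies and yields that the Diophantine problem in $K(G)$ is undecidable for every field $K$. Concretely, the centralizer $C_{K(G)}(g)$ is e-interpretable in $K(G)$ (by Lemma \ref{pr:centralizer-hyp}) and isomorphic to a ring of Laurent polynomials $K[T,T^{-1}]$; by Corollary \ref{co:infinite-Laurent} the Diophantine problem in this Laurent polynomial ring is undecidable, and Lemma \ref{le:reductions} then transfers undecidability to $K(G)$.

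There is no genuine obstacle here: the work has been front-loaded into Theorem \ref{main}, Lemma \ref{pr:centralizer-hyp}, and the RAAG lemma. The only thing one has to be a bit careful about is citing the relevant facts about RAAGs (torsion-freeness and the commutation transitivity of powers used in the centralizer calculation), but these are already either standard or invoked explicitly in the preceding lemma, so the corollary follows in a single line of reasoning.
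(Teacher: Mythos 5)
Your proposal is correct and matches the paper's (implicit) argument exactly: the corollary is intended to follow immediately from the preceding lemma, which exhibits an element of a RAAG whose centralizer is of Laurent type, combined with Theorem \ref{main}. Your verification of the two hypotheses (torsion-freeness of RAAGs and the property $C_G(g^k)=C_G(g)$ via the commutation property of powers) is precisely the intended one-line derivation.
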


\begin{lemma}
Let $G$ be a free product of two non-trivial groups which are not both of order 2.  Then there are elements $g \in G$ with centralizers of  Laurent type.
\end{lemma}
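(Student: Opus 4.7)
My plan is to produce an explicit hyperbolic element of $G = A \ast B$ whose centralizer can be computed via the Bass--Serre tree of the free-product splitting. Since $A$ and $B$ are both non-trivial, choose $a \in A \smallsetminus \{1\}$ and $b \in B \smallsetminus \{1\}$ and set $g = ab$. In the free-product normal form $g$ is cyclically reduced of length two, so by standard free-product theory $g$ has infinite order and is not conjugate into either factor.

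Next I would compute $C_G(g)$ using the action of $G$ on its Bass--Serre tree $T$, whose vertex stabilizers are conjugates of $A$ and $B$ and whose edge stabilizers are \emph{trivial}. Since $g$ is not conjugate into a factor, it acts hyperbolically on $T$ along a unique invariant axis $L$. Every element commuting with $g$ normalizes $\langle g \rangle$ and therefore preserves $L$ setwise, so $C_G(g)$ embeds into the setwise stabilizer $\mathrm{Stab}_G(L)$. Because edge stabilizers are trivial, no non-identity element of $G$ fixes a segment of $L$, and hence $\mathrm{Stab}_G(L)$ acts faithfully on $L$ and embeds into $\mathrm{Aut}(L) \cong D_\infty$. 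Elements of $C_G(g)$ commute with the non-trivial translation that $g$ induces on $L$, so they lie in the translation subgroup of $D_\infty$, which is infinite cyclic. Consequently $C_G(g)$ is a non-trivial subgroup of $\Z$, so $C_G(g) \cong \Z$ --- a countable free abelian group of rank one.

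For the remaining condition $C_G(g^k) = C_G(g)$ with $k \neq 0$: the power $g^k$ has the same axis $L$ as $g$, so the same argument forces $C_G(g^k)$ into the translation subgroup of $\mathrm{Aut}(L)$; combined with the obvious inclusion $C_G(g) \subseteq C_G(g^k)$ and the fact that $C_G(g)$ already exhausts the translations of $L$ realized inside $G$, one obtains the desired equality. The main (mild) obstacle is the careful bookkeeping in the Bass--Serre step; an alternative route avoids trees altogether by invoking the classical Magnus--Karrass--Solitar / Lyndon--Schupp result that in any non-trivial free product the centralizer of an element not conjugate into a factor is infinite cyclic, which reduces the entire task to the existence of such an element. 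The hypothesis that $A$ and $B$ are not both of order two plays no role in the centralizer computation itself; it merely rules out the degenerate virtually cyclic case $\Z/2 \ast \Z/2$, in which the subsequent application to undecidability via Theorem~\ref{main} would be unavailable anyway.
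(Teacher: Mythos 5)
Your proof is correct. The paper in fact states this lemma with no proof at all (in contrast with the hyperbolic and RAAG cases, which it handles by citing commutation-transitivity and Servatius's Centralizer Theorem respectively), so there is no argument of the authors' to compare against; your write-up fills a genuine omission. The classical route you mention in passing --- the Magnus--Karrass--Solitar/Lyndon--Schupp theorem that in a non-trivial free product the centralizer of any element not conjugate into a factor is infinite cyclic, applied to $g=ab$ with $a\in A\smallsetminus\{1\}$, $b\in B\smallsetminus\{1\}$ --- is surely the intended one-line argument and is the most economical. Your Bass--Serre version is a complete self-contained substitute: the only slightly compressed step is the final equality $C_G(g^k)=C_G(g)$, where the point is that the subgroup $P$ of elements translating along the axis $L$ is abelian (commutators fix $L$ pointwise and edge stabilizers are trivial) and contains $g$, giving $P\subseteq C_G(g)\subseteq C_G(g^k)\subseteq P$; an even quicker finish notes that $g$ and $C_G(g^k)$ both lie in the cyclic group $C_G(g^k)$, so every element of $C_G(g^k)$ commutes with $g$. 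Your remark about the hypothesis that the factors are not both of order $2$ is also accurate: $\Z/2\ast\Z/2$ still has $C_G(ab)\cong\Z$ of Laurent type, so the exclusion is needed only for the subsequent application (Theorem~\ref{main} and Corollary~\ref{cor:5}), not for the lemma itself.
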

\begin{cor} \label{cor:5} Let $G$ be a free product of two non-trivial groups which are not both of order 2.  Then 
for any field K the Diophantine problem for K(G) is undecidable.
\end{cor}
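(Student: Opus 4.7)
The plan is to derive Corollary \ref{cor:5} from the preceding Lemma combined with Theorem \ref{main}; the argument is essentially a one-step reduction. The Lemma guarantees that any free product $G = A \ast B$ with $A, B$ non-trivial and not both of order two contains an element $g$ whose centralizer $C_G(g)$ in $G$ is of Laurent type, meaning $C_G(g)$ is a countable free abelian group and $C_G(g^k) = C_G(g)$ for every non-zero integer $k$. These are exactly the hypotheses of Theorem \ref{main}, so applying that theorem to this $g$ and the given field $K$ yields undecidability of the Diophantine problem in $K(G)$, establishing the corollary.

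If I were also asked to justify the Lemma itself (which I would do before invoking it), I would proceed as follows. In $A \ast B$ with factors not both of order two, pick $a \in A$ and $b \in B$ non-trivial with at least one of $a, b$ of order greater than two, and set $g := ab$. Then $g$ is cyclically reduced of length two and not conjugate into either factor, so by the classical normal form theory for free products its centralizer in $G$ is infinite cyclic, generated by the unique primitive root $h$ of $g$. Since any positive power of $g$ shares the same primitive root $h$, one immediately gets $C_G(g^k) = \langle h \rangle = C_G(g)$ for every $k \neq 0$, so $C_G(g)$ is of Laurent type (rank one).

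The main conceptual obstacle I anticipate is that Theorem \ref{main} is stated for torsion-free $G$, whereas a free product with torsion in a factor is not torsion-free; the corollary's hypothesis does not exclude such factors. However, inspection of Lemma \ref{pr:centralizer-hyp} and of the proof of Theorem \ref{main} shows that torsion-freeness of $G$ is not actually invoked: the only property used is the commutation-transitivity condition $C_G(g^k) = C_G(g)$, which suffices to identify $C_{K(G)}(g)$ with the group algebra $K(C_G(g))$, after which Corollary \ref{co:infinite-Laurent} on Laurent polynomial rings closes the argument. So the reduction goes through in our setting without modification, and the corollary follows.
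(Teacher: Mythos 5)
Your proof is correct and follows the paper's own (implicit) route: the preceding Lemma supplies an element $g$ with centralizer of Laurent type, and Theorem \ref{main} then gives undecidability of the Diophantine problem in $K(G)$; moreover, your observation that the torsion-freeness hypothesis of Theorem \ref{main} is never actually used (only the conditions $C_G(g^k)=C_G(g)$ and freeness of $C_G(g)$ enter Lemma \ref{pr:centralizer-hyp} and the reduction to Laurent polynomials) correctly closes the one gap the paper leaves unaddressed, since a free product with torsion in a factor is not torsion-free. One small correction to your sketch of the Lemma: one cannot always choose $a$ or $b$ of order greater than two (e.g.\ in $(\Z/2\Z)^2 \ast \Z/2\Z$ every non-trivial element of each factor has order two), but that requirement is superfluous --- for any non-trivial $a\in A$ and $b\in B$ the element $g=ab$ is cyclically reduced of syllable length two, hence not a proper power and not conjugate into a factor, so the centralizer theorem for free products gives $C_G(g^k)=\langle g\rangle=C_G(g)$ for all $k\neq 0$.
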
 

\section{Positive results}

In this section we discuss some particular Diophantine problems that are decidable in  free associative algebras $\MA_K(A)$ and group rings $K(G)$ over hyperbolic groups $G$. We assume here for simplicity that the filed of coefficients $K$ is computable (recursive), otherwise one needs to fix   a computable subfield $K_0 \leq K$  and consider only equations from $\MA_{K_0}(A)$ and  $K_0(G)$.

\subsection{Finding solutions of bounded length}

Let $\MA_K(A)$ be a free associative algebra with coefficients in a computable filed $K$ and a finite or countable basis $A$.  Notice that in this case the algebra $\MA_K(A)$ is also computable. One important remark is due here: if the Diophantine problem is decidable in a computable ring $R$ then there is also an algorithm to find a solution to a finite system of equations if it is known that that the system has one. Indeed, in this case one can enumerate all elements in $R$ and try all of them one by one until a solution is found. 

Recall that every non-zero element $f \in \MA_K(A)$ can be written in the form $f = \Sigma_i \alpha_i M_i$ where $M_i$ are pair-wise distinct elements of the free monoid on $A$ and $\alpha_i$ are non-zero elements from $K$. This form is unique up to a permutation of the summands, we refer to it as a normal form of $f$.

A solution $x \to f_x (x \in X)$ to an equation $P(X,A) = 0$ in $\MA_K(A)$ has degree at most $m$ if all the monomials in the normal forms of all the polynomials $f_x (x \in X)$ have degree at most $m$. 

The Bounded  Degree Diophantine Problem in $\MA_K(A)$ is decidable if there exists an algorithm which given a finite system of equations $P(X,A) = 0$ in $\MA_K(A)$  and a number   $m \in \mathbb{N}$ decides whether  or not there is a solution of degree at most $m$  of the system $P(X,A) = 0$, and if so finds one.

\begin{theorem} \label{th:bounded-degree}
The Bounded  Degree Diophantine Problem in $\MA_K(A)$ is decidable if and only if the Diophantine problem in the filed $K$ is decidable. 
\end{theorem}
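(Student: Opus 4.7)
The plan is to prove the two directions of the iff separately, with the harder one resting on a simple projection trick. For the direction ``Bounded Degree in $\MA_K(A)$ decidable implies Diophantine in $K$ decidable'', I observe that a polynomial of degree at most $0$ in $\MA_K(A)$ is exactly an element of $K$. Thus, given any finite system $\Sigma(X) = 0$ of polynomial equations over $K$, viewed as a system over $\MA_K(A)$ via the natural inclusion $K \subseteq \MA_K(A)$, its degree-$\leq 0$ solutions in $\MA_K(A)$ coincide precisely with its solutions in $K$. Invoking the hypothetical algorithm with bound $m = 0$ therefore decides the Diophantine problem in $K$.

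For the converse direction, let $A_0 \subseteq A$ denote the finite set of letters occurring in the coefficients of the input system $P(X,A_0) = 0$. My first step is a projection lemma. Define the $K$-algebra homomorphism $\pi : \MA_K(A) \to \MA_K(A_0)$ by $\pi(a) = a$ for $a \in A_0$ and $\pi(a) = 0$ for $a \in A \smallsetminus A_0$; this is well-defined because $\MA_K(A)$ is free on $A$. Since $\pi$ acts on each basis monomial by either preserving it (if all its letters lie in $A_0$) or annihilating it, it never increases degree, and it fixes the coefficients of $P$ pointwise. Therefore, if $(f_1,\dots,f_n)$ is a degree-$\leq m$ solution of $P$ in $\MA_K(A)$, then $(\pi(f_1),\dots,\pi(f_n))$ is a degree-$\leq m$ solution sitting inside $\MA_K(A_0)$; the converse inclusion $\MA_K(A_0) \hookrightarrow \MA_K(A)$ is trivial. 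This reduces the problem to deciding existence of degree-$\leq m$ solutions in $\MA_K(A_0)$, a free algebra on a finite alphabet.

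The remaining step is standard coefficient comparison. Let $\mathcal{M}_m$ be the (finite) set of monomials in $A_0$ of degree at most $m$; it is a $K$-basis for the polynomials of degree $\leq m$ in $\MA_K(A_0)$. Parametrize each unknown as $x_i = \sum_{M \in \mathcal{M}_m} \xi_{i,M}\, M$ with new variables $\xi_{i,M}$ ranging over $K$, substitute into $P(X,A_0) = 0$, expand, and equate to zero the coefficient of each monomial of $\MA_K(A_0)$. Because distinct monomials are $K$-linearly independent in the free algebra, this yields a finite, effectively constructible system $\Sigma(\xi) = 0$ of polynomial equations over $K$ in the $\xi_{i,M}$. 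By hypothesis $\Sigma(\xi) = 0$ is decidable over $K$, and since $K$ is computable a concrete witness can be found whenever one exists; back-substitution then delivers the required degree-$\leq m$ solution of $P(X,A_0) = 0$ in $\MA_K(A_0) \subseteq \MA_K(A)$. The only nontrivial point is the projection lemma itself, but this is essentially immediate from the universal property of the free algebra on $A$ and the fact that $\pi$ respects the degree filtration.
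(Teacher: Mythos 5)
Your proof is correct and follows essentially the same route as the paper: reduce to the finite alphabet $A_0$ via the retraction killing $A \smallsetminus A_0$, then parametrize degree-$\leq m$ unknowns by indeterminate coefficients over the finitely many monomials and compare coefficients to get a finite system over $K$; the reverse direction via degree-$0$ solutions is also the paper's argument. You are somewhat more explicit about why the retraction preserves the degree bound, but there is no substantive difference.
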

\begin{proof}
 Suppose an equation $P(X,A) = 0$ and a number $m \in \N$ are given. Notice first, that for a given  system $P(X,A) = 0$ one can compute  the finite set $A_0$  of all letters from the basis $A$ that occur in the system $P(X,A) =0$.  The subalgebra $\MA_K(A_0)$ is a retract of the algebra $\MA_K(A)$, so $P(X,A) = 0$ has a solution in $\MA_K(A)$ if and only if it has a solution in $\MA_K(A_0)$.  Furthermore, $P(X,A) = 0$ has a solution of degree at most $m$ in  $\MA_K(A)$ it has one like that in  $\MA_K(A_0)$ (by applying a retract that sends all elements from $A \smallsetminus A_0$ to zero). This shows that we can 
 assume  that the basis $A$ is finite. In this case there are only finitely many monomials of degree $\leq m$, say $M_1, \ldots, M_n$. So any polynomial $f \in \MA_K(A_0)$  of degree $\leq m$ has the form $\Sigma_{i = 1}^n\alpha_iM_i$, where $\alpha_i$ are "indeterminate coefficients" from $K$. To find a solution to $P(X,A) = 0$ it suffices to plug in such polynomials into $P(X,A) =0$ and solve the corresponding system for indeterminate coefficients, which results in a system of equations in $K$. It follows that if the Diophantine problem in $K$ is decidable then the bounded degree  Diophantine problem in $\MA_K(A)$ is also decidable.  Conversely, if the 
bounded degree Diophantine problem in $\MA_K(A)$ is decidable then given a finite system of equations in $K$ one can check if the system has a solution in $K$ by  solving  this system in $\MA_K(A)$ looking for  solutions of degree 0. 

\end{proof}

Let $G$ be a group generated by a finite set $A$. For an element $g \in G$ define the $A$-length $|g|_A$ of $g$ as the length of a shortest word in the alphabet $A \cup A^{-1}$ representing the element $g$ in the generators $A$. If $f = \Sigma \alpha_i g_i$ is an element of $K(G)$, where $\alpha_i \in K \smallsetminus \{0\}, g_i \in G$, then by $|f|_A$ we denote the  least total length $\Sigma_i |g_i|_A$ among all such representations of $f$. Let  $P(X,A) = 0$ be a finite system of equations  in variables from a finite set $X$ and coefficients from $K(G)$. We say that a solution $x \to f_x (x \in X)$ of $P(X,A) = 0$ in $K(G)$ is of length at most $m$ if $\Sigma_{x \in X}|f_x|_A \leq m$.

We say that the Bounded  Length  Diophantine Problem in $K(G)$ is decidable if there exists an algorithm which given a finite system of equations $P(X,A) = 0$ in $K(G)$  and a number   $m \in \mathbb{N}$ decides whether  or not there is a solution of length at most $m$  of the system $P(X,A) = 0$, and if so finds one. 

\begin{theorem}
Let $G$ be a group generated by a finite set $A$.  The Bounded  Length  Diophantine Problem in $K(G)$ is decidable if and only if the Diophantine problem in the filed $K$, as well as the word problem in $G$, are  decidable. 
\end{theorem}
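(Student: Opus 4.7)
My plan is to handle the two directions of the biconditional separately, using in each case the simple observation that an element $f\in K(G)$ satisfies $|f|_A=0$ if and only if $f\in K$, since $1_G$ is the only group element of $A$-length $0$.

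\emph{Necessity.} Assume the Bounded Length Diophantine Problem in $K(G)$ is decidable. To decide a finite system $\Sigma(x_1,\ldots,x_n)=0$ over $K$, I would regard it as a system over $K(G)$ and query the Bounded Length oracle with $m=0$; by the observation above, length-$0$ solutions in $K(G)$ are precisely the $K$-solutions. To decide the word problem in $G$, given a word $w$ over $A\cup A^{-1}$, I would view $w$ as a coefficient in $K(G)$ and ask the oracle whether the one-variable equation $x-w=0$ has a length-$0$ solution. Its only solution is $x=w$, so this holds if and only if $w\in K$, if and only if $w=1$ in $G$.

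\emph{Sufficiency.} Assume both the Diophantine problem in $K$ and the word problem in $G$ are decidable. Given a system $P(X,A)=0$ with $X=\{x_1,\ldots,x_n\}$ and a bound $m$, I would first list all words of length $\leq m$ in $A\cup A^{-1}$ and use the word problem to prune duplicates, obtaining distinct representatives $g_1,\ldots,g_N$ of the finite set $G_m=\{g\in G:|g|_A\leq m\}$. Since any length-$\leq m$ solution $(f_{x_i})$ has $\mathrm{supp}(f_{x_i})\subseteq G_m$, it suffices to loop over all tuples of subsets $(S_1,\ldots,S_n)$ with $S_i\subseteq\{1,\ldots,N\}$ and $\sum_{i=1}^n\sum_{j\in S_i}|g_j|_A\leq m$; for each tuple, write the candidate $f_{x_i}=\sum_{j\in S_i}\alpha_{i,j}g_j$ with indeterminates $\alpha_{i,j}\in K$, substitute into $P$, and expand. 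Using the decidable word problem, one groups terms by their group-element value and reads off a finite system of polynomial equations in the $\alpha_{i,j}$ over $K$, one equation per group element appearing, asserting the vanishing of the corresponding formal coefficient. Invoking the Diophantine oracle for $K$ on each such system and returning ``yes'' iff at least one is solvable gives the desired decision procedure.

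\emph{Main obstacle.} The only delicate point is correctness of the coefficient-collection step: expanding $P$ yields terms of the form (polynomial in the $\alpha_{i,j}$) times (product of group elements), and one must decide when two such products represent the same element of $G$ in order to add their polynomial coefficients. This is exactly the word problem in $G$, which is why that hypothesis is both necessary and sufficient. The support-iteration is correct because allowing $\alpha_{i,j}=0$ means every length-$\leq m$ solution is captured by the tuple $(S_i)=(\mathrm{supp}(f_{x_i}))$, while conversely any solution of the polynomial system for a tuple $(S_i)$ yields $f_{x_i}$ with $|f_{x_i}|_A\leq\sum_{j\in S_i}|g_j|_A$, preserving the total length bound; finiteness of the list of tuples and of each resulting $K$-system ensures termination.
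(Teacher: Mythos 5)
Your proposal is correct and follows essentially the same route as the paper: the necessity direction reduces equations over $K$ to length-$0$ queries and encodes the word problem via a trivial equation involving $w$ (the paper uses the variable-free equation $w-1=0$, you use $x-w=0$ with bound $0$; both work), and the sufficiency direction is the indeterminate-coefficients argument over the finite ball $G_m$, which is exactly the adaptation of the bounded-degree proof that the paper invokes by reference. Your write-up usefully makes explicit the role of the word problem in collecting coefficients, a detail the paper leaves implicit.
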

\begin{proof}
 Indeed, suppose first that the Bounded  Length  Diophantine Problem in $K(G)$ is decidable. Then viewing a finite system of equations in $K$ as a system of equations in $K(G)$ and looking for solutions of degree 0 one can decide if this system has a solution in $K$, and if so find a solution. Also, given a word $w$ in the alphabet $A \cup A^{-1}$ one can solve an equation  $w - 1= 0$  in $K(G)$. Since this equation does not have any variables it has a solution in $K(G)$ if and only if it has a solution in $G$, hence the word problem in $G$ is decidable. 

Conversely, if Diophantine problem in $K$ and the word problem in $G$ are decidable then one can solve the   Bounded  Length  Diophantine Problem in $K(G)$ using an  argument similar to the one in Theorem \ref{th:bounded-degree}. 
\end{proof}

\subsection{Finding solutions of bounded width}
\label{se:bounded-width}

The width $width(f)$ of a polynomial $f \in \MA_K(A)$ is  the number of monomials that occur in the normal form of $f$.

A solution $x \to f_x (x \in X)$ to an equation $P(X,A) = 0$ in $\MA_K(A)$ has width at most $m$ if all  polynomials $f_x (x \in X)$ have width  at most $m$. 
We say that the Bounded  Width  Diophantine Problem in $\MA_K(A)$ is decidable if there exists an algorithm which given a finite system of equations $P(X,A) = 0$ in $\MA_K(A)$  and a number   $m \in \mathbb{N}$ decides whether  or not there is a solution of width  at most $m$  of the system $P(X,A) = 0$, and if so finds one.

\begin{theorem}\label{th:solutions-bounded-width}
Let $A$ be a finite or countable set and $K$ a field with decidable  Diophantine problem. Then  the Bounded Width  Diophantine Problem in $\MA_K(A)$ is decidable.
\end{theorem}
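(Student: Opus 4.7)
The plan is to reduce the Bounded Width Diophantine Problem in $\MA_K(A)$ to a finite combination of (a) systems of word equations in a finitely generated free monoid, decidable by Makanin's theorem \cite{Makanin1}, and (b) finite systems of polynomial equations over $K$, decidable by hypothesis.

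First, exactly as in the proof of Theorem \ref{th:bounded-degree}, the retract $\MA_K(A) \to \MA_K(A_0)$ that kills every letter outside the finite set $A_0 \subseteq A$ of letters occurring in the given system preserves both solutions and the width bound, so I may assume that $A = A_0$ is finite. Next, I parametrize an unknown candidate solution of width at most $m$ by writing, for each variable $x \in X$,
$$f_x \;=\; \sum_{i=1}^{m} \alpha_{x,i}\, M_{x,i},$$
where the $\alpha_{x,i}$ are unknowns ranging over $K$ (allowed to be zero) and the $M_{x,i}$ are unknown elements of the free monoid $A_0^{*}$ (allowed to coincide). Plugging this into $P(X,A)$ and expanding multilinearly produces a formal expression
$$P(X,A)\big|_{x \mapsto f_x} \;=\; \sum_{j=1}^{N} c_j \cdot W_j,$$
where each $c_j$ is a product of some $\alpha_{x,i}$'s together with $K$-coefficients coming from $P$, each $W_j$ is a formal concatenation of letters from $A_0$ and of the ``word variables'' $M_{x,i}$, and $N$ is bounded explicitly in terms of $P$ and $m$.

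The crux is a decoupling observation. Since $\MA_K(A_0)$ is the free $K$-module on $A_0^{*}$, the substituted polynomial vanishes if and only if, for every monomial $w \in A_0^{*}$, the sum of those $c_j$'s for which $W_j$ evaluates to $w$ is zero in $K$. Equivalently, there is a partition $\Pi$ of $\{1,\dots,N\}$ such that (i) within each block of $\Pi$, all the $W_j$'s coincide as elements of $A_0^{*}$ under the chosen assignment of the $M_{x,i}$'s, and (ii) for each block, the sum of the corresponding $c_j$'s vanishes in $K$. Condition (i) is a finite system of word equations in the $M_{x,i}$ alone and is decidable by Makanin's theorem; condition (ii) is a finite system of polynomial equations in the $\alpha_{x,i}$ alone (the $K$-coefficients of $P$ being known constants) and is decidable by the hypothesis on $K$. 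Crucially, the two subsystems share no variables, so they may be solved independently and any pair of solutions may be combined.

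The algorithm therefore enumerates the finitely many partitions $\Pi$ of $\{1,\dots,N\}$ and, for each, runs Makanin's algorithm on (i) and the $K$-Diophantine algorithm on (ii). The original bounded-width system has a solution iff some $\Pi$ yields success in both, in which case an explicit solution $f_x = \sum_i \alpha_{x,i}^{*}\, M_{x,i}^{*}$ is produced by combining the two outputs; otherwise no solution exists. The main obstacle is the clean justification of the decoupling step, where one must verify that the only way $\sum_j c_j W_j$ can vanish in $\MA_K(A_0)$ is through such a clustering of indices, a fact that ultimately reduces to the freeness of $A_0^{*}$ as a $K$-basis of $\MA_K(A_0)$.
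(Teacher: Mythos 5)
Your proof is correct and takes essentially the same route as the paper's: reduce to a finite alphabet via a retraction, parametrize width-$\leq m$ solutions by unknown coefficients $\alpha_{x,i}\in K$ and unknown monomials $M_{x,i}\in A_0^{*}$, and enumerate the finitely many ways of collecting like terms (your partitions $\Pi$), each yielding a decoupled pair consisting of a word-equation system decided by Makanin's theorem and a polynomial system over $K$ decided by hypothesis. Your explicit justification of the decoupling --- a partition refining the coincidence pattern of the $W_j$'s with each block's coefficients summing to zero --- is just a more careful rendering of the paper's ``the collection of the terms can proceed in finitely many ways.''
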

\begin{proof}
Observe first that argying like in Theorem \ref{th:bounded-degree} one can assume that the set  $A$ is finite. Now the result follows from Makanin's result \cite{Makanin1} on the decidability of the  systems of equations in a  free  semigroup.  Indeed, if $P(x_1, \ldots,x_n,A) = 0$ is a finite system of equations in $\MA_K(X)$ then substituting  a solution $x_i \to f_i \in \MA_K(A)$ into $P(X,A) = 0$ results in $P(f_1, \ldots,f_n,A) $ which is zero in $\MA_K(A)$. Hence after collecting similar terms in $P(f_1, \ldots,f_n,A) $ all coefficients become zero.  If the solution is of bounded width $\leq m$  the collection of the terms can proceed in finitely many ways (which depends on $m$), each of which results in a system of equations $S_j = 1$ in the free monoid $A^\ast$ and the corresponding  finite system of equations  $T_j = 0$ in $K$.  Given $P(X,A) = 0$ and $m$ all such possible pairs $(S_j,T_j)$ can be found effectively.  Hence $P(X,A) = 0$ has a solution in $\MA_K(A)$ of width at most $m$ if and only if there is a pair $(S_j,T_j)$ as above where $S_i = 1$ has a solution in $A^\ast$ and $T_j = 0$ has a solution in $K$. The former can be verified by the Makanin's result and the latter by our assumption that Diophantine problem in $K$ is decidable. Since the filed $K$ is computable one can find a solution of $T_j = 0$ if it exists. 
\end{proof}

Similar result holds in $K(G)$. Recall that every non-zero element $f \in K(G)$ can be written in the form $f = \Sigma_i \alpha_i g_i$ where $g_i$ are pair-wise distinct elements of the group $G$ and $\alpha_i$ are non-zero elements from $K$. This form is unique up to a permutation of the summands, we refer to it as a normal form of $f$. The width $width(f)$ of an element $f \in K(G)$ is  the number of summands in a normal form of $f$.  
A solution $x \to f_x (x \in X)$ to an equation $P(X,A) = 0$ in $K(G)$ has width at most $m$ if all  elements  $f_x (x \in X)$ have width  at most $m$.  Similar to the case of free associative algebras one can define the Bounded Width Diophantine Problem in $K(G)$.

\begin{theorem} \label{th:bounded-width-groups}
Let $G$ be a finitely generated group and $K$ a field. The Bounded Width  Diophantine Problem in $K(G)$ is decidable if and only if  Diophantine problems in $K$ and $G$ are  decidable. 
\end{theorem}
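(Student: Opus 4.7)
The plan is to prove both implications. For the backward direction I follow the Makanin-style argument of Theorem \ref{th:solutions-bounded-width}, adapted from the free monoid $A^\ast$ to the group $G$; for the forward direction I use two reductions based on the natural embedding $K \hookrightarrow K(G)$ and the augmentation $\varepsilon : K(G) \to K$.

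For the backward direction, assume that the Diophantine problems in $K$ and in $G$ are decidable, and let a finite system $P(X, A) = 0$ over $K(G)$ and a bound $m \in \N$ be given. Write each unknown in the generic form $x_i = \sum_{j=1}^{m} \alpha_{ij} g_{ij}$, with the $\alpha_{ij}$ new variables over $K$ and the $g_{ij}$ new variables over $G$ (allowing $\alpha_{ij} = 0$ to cover smaller widths); similarly write each constant from $A$ in its normal form $\sum_p \gamma_p b_p$. Substituting into $P$ and expanding, $P(X,A)$ becomes a finite formal sum $\sum_k c_k H_k$, where each $c_k$ is a polynomial in the $\alpha_{ij}$ with $K$-coefficients and each $H_k$ is a word in the $g_{ij}$ and the group constants $b_p$. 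Now $P = 0$ in $K(G)$ iff for every partition of the indices $k$ into classes in which all $H_k$ coincide in $G$, the corresponding coefficient sums vanish. Since the number of formal monomials is finite, I enumerate all partitions $\pi$ of their index set and, for each $\pi$, form the finite system $\Sigma_\pi^G := \{H_k = H_{k'} : k, k' \text{ in the same } \pi\text{-class}\}$ of equations over $G$ and the finite system $\Sigma_\pi^K := \{\sum_{k \in C} c_k = 0 : C \in \pi\}$ of polynomial equations over $K$; by the assumed decidability one decides whether each is solvable. A width-$\leq m$ solution of $P = 0$ exists iff some $\pi$ makes both $\Sigma_\pi^G$ and $\Sigma_\pi^K$ solvable: any such solution induces a partition $\pi^*$ satisfying both systems, while conversely any pair of solutions to $(\Sigma_\pi^G, \Sigma_\pi^K)$ produces an assignment whose true partition $\pi^*$ is $\pi$ or a coarsening of it, and coarsenings preserve the sum-to-zero conditions by additivity of summation over unions of classes.

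For the forward direction, assume the Bounded Width Diophantine Problem in $K(G)$ is decidable; the reductions use the bound $m = 1$. For $K$: a finite system $Q(\bar y) = 0$ over $K$ is viewed as a system over $K(G)$ via the inclusion $K \hookrightarrow K(G)$. Any $K$-solution is a width-$\leq 1$ solution in $K(G)$; conversely any width-$\leq 1$ solution $y_i \mapsto f_i \in K(G)$ yields the $K$-solution $y_i \mapsto \varepsilon(f_i)$, where $\varepsilon : \sum \alpha_j g_j \mapsto \sum \alpha_j$ is the ring homomorphism fixing $K$. For $G$: given a finite system $W_l(\bar y) = 1$ of group equations, first eliminate each occurrence of $y_i^{-1}$ by introducing a fresh variable $z_i$ together with the equations $y_i z_i = 1$ and $z_i y_i = 1$; the resulting $K(G)$-system contains no inverses. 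A $G$-solution $\bar g$ gives the width-$1$ $K(G)$-solution $y_i = g_i$, $z_i = g_i^{-1}$; conversely any width-$1$ $K(G)$-solution $y_i = \alpha_i g_i$, $z_i = \beta_i h_i$ forces $\beta_i = \alpha_i^{-1}$ and $h_i = g_i^{-1}$, after which each $W_l$ evaluates in $K(G)$ to a product of $\alpha_i^{\pm 1}$ times the group-word value $W_l(\bar g)$, by centrality of $K$ in $K(G)$; equating this with $1 \cdot 1_G$ forces $W_l(\bar g) = 1$ in $G$ for every $l$.

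The main obstacle is the backward direction, and in particular verifying that no inequations among the $H_k$ need to be imposed on $G$: otherwise one would have to appeal to the positive existential theory of $G$ with negations rather than to the plain Diophantine problem. The resolution is the additivity observation above: if class-wise sum-to-zero holds for $\pi$, it is preserved under every coarsening, so the true partition of any $G$-assignment satisfying $\Sigma_\pi^G$ automatically meets the required condition. With this point in hand, the remaining work is a finite combinatorial enumeration over partitions coupled with parallel calls to the Diophantine oracles for $K$ and for $G$.
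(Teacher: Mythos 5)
Your proof is correct and follows essentially the same route as the paper: the backward direction adapts the term-collection (partition enumeration) argument of Theorem \ref{th:solutions-bounded-width} with $G$ in place of the free monoid, and the forward direction uses width-one solutions together with the augmentation $\varepsilon : K(G)\to K$. You also supply two details the paper leaves implicit --- that coarsenings of a partition preserve the vanishing of class-wise coefficient sums (so no inequations over $G$ are needed), and the elimination of inverses in group equations via fresh variables $z_i$ with $y_i z_i = z_i y_i = 1$ --- and both check out.
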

\begin{proof} 
If  Diophantine problems in $K$ and $G$ are  decidable then one can show using an argument  similar to the one  in Theorem  \ref{th:solutions-bounded-width} that the Bounded Width  Diophantine Problem in $K(G)$ is decidable. 
Conversely, if the Bounded Width  Diophantine Problem in $K(G)$ is decidable then one can solve finite systems of equations in $G$, by viewing them as systems in  $K(G)$ and looking for solutions of the width one.
Similarly, a finite system  $T = 0$ of equations in $K$ can be viewed as a system of equations in $K(G)$, in this case a homomorphism $G \to 1$ gives rise to a $K$-linear homomorphism $\varepsilon : K(G) \to K$ which is a retract on $K$. Hence if $T = 0$ has a solution in $K(G)$ then applying $\varepsilon$ to this solution one gets a solution of $T = 0$ in $K$. Hence, $T = 0$ has a solution $K$ if and only if it has a solution in $K(G)$. So Diophantine problem in $K$ is also decidable.
\end{proof}

\end{document}